\renewcommand{\a}{\alpha}
\renewcommand{\b}{\beta}
\newcommand{\g}{\gamma}
\newcommand{\e}{\epsilon}
\newcommand{\z}{\zeta}
\newcommand{\h}{\eta}
\renewcommand{\th}{\theta}\newcommand{\Th}{\Theta}
\renewcommand{\l}{\lambda}
\newcommand{\p}{\pi}
\newcommand{\s}{\sigma}\renewcommand{\S}{\Sigma}
\renewcommand{\t}{\tau}
\newcommand{\ch}{\chi}
\renewcommand{\O}{\Omega}
\newcommand{\cC}{\mathcal{C}}
\newcommand{\cO}{\mathcal{O}}
\newcommand{\IC}{\mathbb{C}}
\newcommand{\IP}{\mathbb{P}}
\newcommand{\IR}{\mathbb{R}}
\newcommand{\IZ}{\mathbb{Z}}
\def\str{\vrule height14pt width0pt depth8pt}
\def\place#1#2#3{\vbox to0pt{\kern-\parskip\kern-7pt
                             \kern-#2truein\hbox{\kern#1truein #3}
                             \vss}\nointerlineskip}
\newcommand{\cicy}[2]{\begin{matrix} #1\end{matrix}\!\left[\begin{matrix}#2 \end{matrix}\right]}
\newcommand{\ii}{\text{i}}
\newcommand{\into}{\hookrightarrow}
\newcommand{\mb}[1]{\mathbf{#1}}
\newcommand{\quotient}[1]{_{\hskip-2pt\lower1pt\hbox{$/$}\lower2pt\hbox{\hskip-1pt$#1$}}}
\newcommand{\spn}[1]{\langle #1 \rangle}
\newcommand{\Xh}{\widehat X}
\newcommand{\Xt}{\widetilde X}
\newcommand{\xt}{\tilde x}
\newcommand{\Yt}{\widetilde Y}
\newcommand{\sref}[1]{section~\ref{#1}}
\newcommand{\fref}[1]{Figure~\ref{#1}}
\newcommand{\eref}[1]{equation~\eqref{#1}}
\newcommand{\Dic}{\mathrm{Dic}}
\newcommand{\emptysetnew}{\mathrm{O}\hspace{-7.6pt}\raisebox{1pt}{/}\,}
\newcommand{\Tr}{\mathrm{Tr}}
\newcommand{\hodgenos}{(h^{1,1},\,h^{2,1})}
\newcommand{\symm}[1]{_{\hskip-3pt\lower3pt\hbox{$\left\{#1\right\}$}}}
\newcommand{\cicystop}{~\lower8pt\hbox{.}}
\def\place#1#2#3{\vbox to0pt{\kern-\parskip\kern-7pt
                             \kern-#2truein\hbox{\kern#1truein #3}
                             \vss}\nointerlineskip}
\newcommand{\beq}{\begin{equation}}
\newcommand{\eeq}{\end{equation}}
\newcommand{\bea}{\begin{eqnarray}}
\newcommand{\eea}{\end{eqnarray}}
\newcommand{\bean}{\begin{eqnarray*}}
\newcommand{\eean}{\end{eqnarray*}}
\newcommand{\comment}[1]{}
\newcommand{\id}{{\bf 1}}
\newcommand{\pt}{\text{pt}}
\newtheorem{theorem}{Theorem}[section]
\newtheorem{lemma}[theorem]{Lemma}
\newtheorem{corollary}[theorem]{Corollary}
\theoremstyle{definition}
\newtheorem*{definition}{Definition}
\newtheorem{example}{Example}[section]
\theoremstyle{remark}
\newtheorem*{remark}{Remark}
\begin{document}
\thispagestyle{empty}

\renewcommand{\thefootnote}{$\dagger$}
\begin{center}
{\LARGE Classification and Properties of\\ Hyperconifold Singularities and Transitions \\}
\vspace{.5in}
Rhys Davies\footnote{\it daviesr@maths.ox.ac.uk}
\\
\vspace{.15in}
{\it
Mathematical Institute\\
University of Oxford\\
Andrew Wiles Building\\
Radcliffe Observatory Quarter\\
Woodstock Road, Oxford\\
OX2 6GG, UK}
\end{center}

\renewcommand{\thefootnote}{\arabic{footnote}}
\setcounter{footnote}{0}

\begin{abstract}
This paper is a detailed study of a class of isolated Gorenstein threefold\linebreak
singularities, called hyperconifolds, that are finite quotients of the conifold.
First, it is shown that hyperconifold singularities arise naturally in limits of
smooth, compact Calabi--Yau threefolds (in particular), when the group
action on the covering space develops a fixed point.  The
$\IZ_n$-hyperconifolds---those for which the quotient group is cyclic---are
classified, demonstrating a one-to-one correspondence
between these\linebreak singularities and three-dimensional lens spaces $L(n,k)$,
which occur as the vanishing\linebreak cycles.  The classification is constructive, and
leads to a simple proof that a \linebreak$\IZ_n$-hyperconifold is mirror to an $n$-nodal
variety.  It is then argued that all factorial $\IZ_n$-hyperconifolds have
crepant, projective resolutions, and this gives rise to transitions between
smooth compact Calabi--Yau threefolds, which are mirror to certain
conifold transitions. Formulae are derived for the change in both
fundamental group and Hodge numbers under such hyperconifold
transitions.

Finally, a number of explicit examples are given, to illustrate how to construct
new Calabi--Yau manifolds using hyperconifold transitions, and also to
highlight the differences which can occur when these singularities occur
in non-factorial varieties.
\end{abstract}

\newpage
\setcounter{page}{0}
\thispagestyle{empty}
\tableofcontents

\newpage
%%%
\section{Introduction and motivation} \label{sec:intro}

One of the intriguing features of three-dimensional Calabi--Yau
manifolds\footnote{For present purposes, a Calabi--Yau $n$-fold is projective,
with trivial canonical class, and Hodge numbers $h^{0,i} = 0$ for
$i=1,\ldots,n-1$.} is that many (perhaps all) deformation families are connected
by topological transitions: a smooth space is deformed until it becomes mildly
singular, and the singularity is then resolved in such a way as to preserve the
Calabi--Yau conditions.  Unlike the two-dimensional case of $K3$ surfaces, the
resulting manifold is not usually diffeomorphic to the one with which we started.
The best-known of these transitions is the conifold transition, where the
intermediate singular variety is nodal \cite{Clemens:1983db,Hirzebruch:1985tc}.
The purpose of this paper is to describe another class of transitions, introduced
in \cite{Davies:2009ub,Davies:2011is}, called \emph{hyperconifold transitions}.
A hyperconifold singularity is a quotient of a node (a precise definition shall be
given shortly), and this paper focusses on the cases where the quotient group
is a finite cyclic group.

Hyperconifold singularities are of interest for various reasons.  They arise very
naturally in compact Calabi--Yau threefolds, and the associated transitions can
change the fundamental group, which cannot happen in a conifold transition.
The best way to understand these statements is to first discuss smooth
multiply-connected Calabi--Yau threefolds.

Odd-dimensional Calabi--Yau manifolds can have non-trivial fundamental
groups, and multiply-connected Calabi--Yau threefolds play an
important role in heterotic string model building
\cite{Candelas:1985en,Witten:1985bz}.  In practice, these manifolds can
typically be constructed by the following procedure:
\begin{enumerate}
    \item
    A simple algebraic variety $Z$ is given (a toric variety, say), along with
    equations which cut out a simply-connected Calabi--Yau threefold\footnote{Such
    manifolds usually have moduli.  Unless it is important, I will not distinguish
    between a Calabi--Yau manifold, a family of Calabi--Yau manifolds, and a
    generic member of such a family.} $\Xt$.
    \item
    A finite group $G$ is chosen, along with an action of this group by
    automorphisms on the ambient space $Z$.
    \item
    If we can simultaneously choose $\Xt$ to be smooth, invariant under $G$, and
    to miss the fixed points of the $G$ action, then $X := \Xt/G$ is again a
    Calabi--Yau manifold, with fundamental group $\pi_1(X) \cong G$.
\end{enumerate}
In the case where $Z$ is a product of projective spaces, and $\Xt$ a complete
intersection therein (a so-called `Complete Intersection Calabi--Yau', or CICY
\cite{Candelas:1987kf}),
such group actions have been completely classified \cite{Braun:2010vc},
and this gives rise to by far the largest known class of multiply-connected
Calabi--Yau threefolds.

Restricting to $G$-invariant $\Xt$ typically leaves some freedom in the defining
polynomials (corresponding to moduli of the quotient space $X$), and it is often
possible to deform them until $\Xt$ contains a single fixed point of the $G$
action.  When this occurs, $\Xt$ necessarily becomes singular:
\begin{theorem}
    Let $\Xt_t$ be a family of Calabi--Yau threefolds, with parameter $t$, admitting
    an action by a finite group $G$ that maps each fibre to itself.  Suppose that, for
    $t\neq 0$, $\Xt_t$ is smooth, and the $G$ action is fixed-point-free, but that
    $\Xt_0$ contains a single $G$-fixed point.  Then $\Xt_0$ is singular at this
    fixed point.
\end{theorem}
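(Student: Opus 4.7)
The plan is to argue by contradiction: suppose $\Xt_0$ is smooth at the $G$-fixed point $p$, and then derive the existence of $G$-fixed points on nearby fibres $\Xt_t$, contradicting the hypothesis.

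First, consider the total space $\cX$ of the family, with projection $\pi:\cX\to\Delta$ onto a small disc about $0$; the special fibre $\Xt_0 = \pi^{-1}(0)$ is cut out locally by the single equation $t=0$. If $\Xt_0$ is smooth at $p$, then $\cX$ is also smooth at $p$, since a smooth Cartier divisor forces the ambient space to be smooth along it. Because $G$ maps each fibre to itself, it acts trivially on the base $\Delta$, and $\pi$ is $G$-equivariant. In particular $G$ fixes $p$ and acts linearly on $T_p\cX$, preserving the short exact sequence
\[
0 \to T_p \Xt_0 \to T_p\cX \to T_0\Delta \to 0,
\]
on whose quotient $G$ acts trivially; hence $(T_p\cX)^G$ surjects onto $T_0\Delta$ and in particular has complex dimension at least $1$.

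Next, invoke the holomorphic linearization theorem (Cartan's lemma for a finite group fixing a smooth point): in a $G$-invariant analytic neighbourhood of $p$ in $\cX$ there exist $G$-equivariant coordinates identifying $\cX$ with a $G$-invariant open set in $T_p\cX$ on which $G$ acts by its induced linear representation. In these coordinates the fixed-point set $\cX^G$ is exactly $(T_p\cX)^G$, a smooth complex submanifold of positive dimension through $p$, and by the previous paragraph its differential at $p$ maps onto $T_0\Delta$.

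Therefore $\pi$ restricted to $\cX^G$ is a submersion at $p$, so its image covers a neighbourhood of $0\in\Delta$. This produces a $G$-fixed point in $\Xt_t$ for every $t$ sufficiently close to $0$, directly contradicting the assumption that the $G$-action is free on $\Xt_t$ for $t\neq 0$. Thus $\Xt_0$ must be singular at $p$. The only non-elementary ingredient is the equivariant linearization, but this is standard for finite groups acting holomorphically with a fixed point; once it is available, the rest of the argument is essentially a dimension count on the fixed locus, so I expect no serious obstacle.
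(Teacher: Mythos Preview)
Your argument is correct, and it takes a genuinely different route from the paper's proof.  The paper argues by contradiction on the \emph{quotient}: if $\Xt_0$ were smooth at the fixed point, then $X_0 = \Xt_0/G$ would have an isolated orbifold singularity, and the family $X_t = \Xt_t/G$ would smooth it; this contradicts Schlessinger's rigidity theorem for quotient singularities of codimension $\geq 3$.  You instead stay on the covering space and track the $G$-fixed locus inside the total space $\cX$, using Bochner--Cartan linearisation and a tangent-space dimension count to show that $\cX^G$ submerses onto the base near $t=0$.

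Your approach is arguably more elementary --- it avoids deformation theory entirely, relying only on the averaging trick (to see that $(T_p\cX)^G \twoheadrightarrow T_0\Delta$, average any lift over $G$) and equivariant linearisation, both standard for finite groups in characteristic zero.  It also does not use the Calabi--Yau condition or the specific dimension, so it proves the stronger statement that in \emph{any} smooth family with a fibrewise $G$-action, an isolated $G$-fixed point cannot appear on a single fibre at a smooth point.  The paper's route, on the other hand, highlights the conceptual reason the singularity is forced from the quotient's perspective: the orbifold point on $X_0$ is rigid, so it cannot be smoothed.  This is the viewpoint most relevant to the rest of the paper, since the subsequent analysis concerns exactly what kind of singularity appears on $X_0$.
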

\begin{proof}
    Suppose to the contrary that $\Xt_0$ is smooth at the fixed point.  Then the
    quotient $X_0 = \Xt_0/G$ contains an isolated orbifold singularity, and by
    assumption, $X_t = \Xt_t/G$ gives a smoothing of this singularity.  But orbifolds
    of codimension $\geq 3$ are rigid \cite{Schlessinger:1971rg}, so this is a
    contradiction.
\end{proof}
\begin{remark}
    The same result holds if $\Xt_0$ contains an isolated fixed point of some
    non-trivial subgroup $H \leq G$.  In the case $H \neq G$, there will be
    multiple singularities, filling out a $G$-orbit.
\end{remark}
There is no indication from this argument about what \emph{kind} of singularity will
occur on $\Xt$, but we might expect that generically it will be the simplest
possible---an ordinary double point.  Indeed, this is what is almost always found
in practice. Before continuing with generalities, it is helpful to look at an explicit
example.

\begin{example}[The $\IZ_5$ quotient of the quintic]
    Perhaps the simplest family of Calabi--Yau threefolds is the quintic
    hypersurfaces in $\IP^4$, and a sub-family of these admits a free action by
    $\IZ_5$, which I will now describe.  If we take homogeneous coordinates $z_i$
    on $\IP^4$, where $i=0,\ldots,4$, then there is an action of $\IZ_5$ given by
    \begin{equation*}
        z_i \mapsto \z^i z_i ~~\mathrm{where}~~ \z = e^{2\p\ii/5} ~.
    \end{equation*}
    We can restrict our attention to homogeneous quintic polynomials $f$ which are
    invariant under this action, and the general such polynomial corresponds to a
    smooth Calabi--Yau hypersurface $\Xt$, on which $\IZ_5$ acts.\footnote{In
    examples, I will frequently state without proof that certain varieties are smooth,
    or that group actions are fixed-point-free.  These statements are all checked
    using the techniques described in \cite{Candelas:2008wb}.}
    
    Fixed points of the $\IZ_5$ action occur when exactly one of the homogeneous
    coordinates is non-zero.  Since each monomial $z_i^5$ is invariant, they can
    all occur as terms in $f$, so the generic invariant $\Xt$ misses the fixed points,
    and the quotient space $X = \Xt/\IZ_5$ is another smooth Calabi--Yau.  To see
    what happens when we allow $\Xt$ to intersect one of the fixed points, define
    affine coordinates $y_i = z_i/z_0$, $i=1,\ldots,4$; on this patch, the equation
    $f = 0$ becomes
    \begin{equation} \label{eq:quintic_expansion}
        0 = A_0 + A_1y_1y_4 + A_2 y_2 y_3 + \ldots ~,
    \end{equation}
    where the $A_m$ are arbitrary complex coefficients, and `$\ldots$' represents
    invariant higher-order terms.  The fixed point is given by $y_i = 0$ for all $i$,
    and the hypersurface will contain this point if and only if $A_0 = 0$.  In this
    case, equation \eqref{eq:quintic_expansion} begins with second-order terms,
    so that $\Xt$ is singular at the origin.  For generic choices of the other
    coefficients, the singularity will be a node, and it is easily checked that $\Xt$
    is still smooth elsewhere.  The quotient space $X$ therefore develops an
    isolated singularity that is a $\IZ_5$ quotient of a node.  This is our first
    example of a hyperconifold.
\end{example}

Let `the conifold' itself (the standard local model for a node, given as a
hypersurface in $\IC^4$ by $y_1 y_4 - y_2 y_3 = 0$) be denoted by $\cC$; we
will review its geometry in Section~\ref{sec:conifold}.    To define more carefully
the class of singularities we wish to study, we need to think about the
properties of group actions on $\cC$ which can arise as above.  First, note that
a free group action on a smooth compact Calabi--Yau threefold necessarily
leaves invariant the holomorphic\linebreak$(3,0)$-form $\O$ (this is a simple application
of the Atiyah-Bott fixed-point formula \cite{AtiyahBott,AtiyahBott2}), and therefore
acts trivially on the canonical bundle, which $\O$ generates.  Therefore the same
is true on the smooth locus of $\Xt_0$, and by continuity, the $G$-action on the
fibre over the singular point must be trivial.  The canonical sheaf of the quotient
is therefore also a bundle, i.e., the quotient space is Gorenstein.

The discussion so far motivates the following definition:
\begin{definition} \label{def:hyperconifold}
    Let $G$ be a finite group.  If $G$ acts by automorphisms on the conifold $\cC$,
    such that $\cC/G$ has an isolated Gorenstein singularity, then $\cC/G$ is called a
    \emph{$G$-hyperconifold}.\footnote{A note on terminology: The term `conifold'
    was introduced in \cite{Candelas:1989ug}, and was originally used to refer to a
    space which is singular only at a finite number of points, each modelled on a
    cone over some smooth space.  In the physics literature, however, `the conifold'
    is now widely used to mean the specific geometry $\cC$; nodal threefolds are
    sometimes said to `have conifold singularities'.  It is this second usage which is
    generalised by the term `hyperconifold'---a contraction of `hyperquotient' and
    `conifold'.}
\end{definition}
Although this allows for arbitrary finite groups $G$, the remainder of this paper
will focus on cyclic groups $\IZ_n$; other examples have not been studied.  One
of the main results will be a classification of $\IZ_n$-hyperconifolds (Theorem
\ref{th:classification}); with a single exception, they are toric
singularities, and this fact is very useful in their study.  Except in Appendix
\ref{app:exception}, where the exceptional case is discussed, we will restrict
our attention to the infinite class of toric hyperconifolds.

The goal of this work is to satisfactorily complete the study of hyperconifolds
begun in \cite{Davies:2009ub,Davies:2011is,Davies:2011fr}, by unifying,
generalising, and expanding upon the results of those papers.  The reader
might want to note that \cite{Davies:2009ub} is made obsolete by
the present paper, but the examples given in \cite{Davies:2011is,Davies:2011fr}
are not repeated here, and may still be of interest.  Furthermore, although this
project was initially motivated by applications to string theory, there is no
discussion of string theory herein, as I have nothing to add to the analysis of
\cite{Davies:2011is}, which described the physics of Type IIB string theory
compactified on a hyperconifold.

The outline of the paper is as follows.  In Section~\ref{sec:conifold}, I will review
the geometry of the conifold $\cC$, including its symmetry group.  Section
\ref{sec:classification} contains the classification of the\linebreak$\IZ_n$-hyperconifolds,
along with a description of their topology.  Their toric data is also given, and this
leads to a demonstration that the mirror of a $\IZ_n$-hyperconifold has
$n$ nodes.  In Section~\ref{sec:transitions}, it is proven that that any factorial
threefold $X_0$ with a hyperconifold singularity has a projective crepant
resolution, proving the ubiquity of hyperconifold \emph{transitions} between
compact Calabi--Yau manifolds.  Section~\ref{sec:examples} contains a
number of examples which illustrate the general results, including one which
is worked through in complete detail---I hope this will be useful as a reference
case.

\subsubsection*{Notation}

The notation used throughout shall be as follows.  $\cC$ is the conifold.  $\Xt$
denotes a smooth simply-connected Calabi--Yau threefold, and $X = \Xt/G$ its
quotient by a free action of the finite group $G$.  Sometimes the independent
Hodge numbers $h^{1,1}$ and $h^{2,1}$ will be appended as superscripts,
e.g., $X^{1,4}$ is a manifold with $\hodgenos = (1,4)$.  A singular deformation
of $\Xt$ will be denoted by $\Xt_0$, and its quotient by $X_0$.  The symbol
$\pi$ will be used for all resolution maps, and a hat will indicate the smooth
space corresponding to such a resolution, e.g., $\pi: \Xh \to X_0$.

\section{The conifold} \label{sec:conifold}

In order to establish conventions, I will first briefly review the pertinent features of
the conifold $\cC$ itself.  It is most simply defined as a hypersurface in $\IC^4$; if
we define complex `hypersurface coordinates' $(y_1, y_2, y_3, y_4)$, then $\cC$
is given by
\begin{equation} \label{eq:conifold}
    y_1 y_4 - y_2 y_3 = 0 ~.
\end{equation}
It is useful to have a symbol for the polynomial itself, so let
$p := y_1 y_4 - y_2 y_3$.

Being a Calabi--Yau, $\cC$ has trivial canonical class, and we can write down
an explicit expression for the nowhere-vanishing holomorphic $(3,0)$-form $\O$
on the smooth locus:
\begin{equation} \label{eq:Omega}
    \O = \oint_{p=0} \frac{dy_1\wedge dy_2 \wedge dy_3 \wedge dy_4}{p} ~.
\end{equation}

We can see that $\cC$ is a toric variety by making the following substitution,
which satisfies $p = 0$ identically:
\begin{equation} \label{eq:toric_coords}
    y_1 = \frac{t_1}{t_3} ~,~ y_2 = t_2 ~,~ y_3 = \frac{t_1}{t_2} ~,~ y_4 = t_3 ~,
\end{equation}
with $t_i \in \IC^*$.  Following standard procedures of toric geometry, we find that
$\cC$ corresponds to the rational polyhedral cone with vertices
\begin{equation*}
    v_1 = (1,0,0) ~,~~ v_2 = (1,1,0) ~,~~ v_3 = (1,0,1) ~,~~ v_4 = (1,1,1)~.
\end{equation*}
It is important to note that this choice of vertices is only unique up to the
action of $SL(3, \IZ)$, and a different choice would correspond to a different
relation between the $y$ and $t$\linebreak coordinates.  We can see that the primitive
lattice vectors which generate the cone lie in a common hyperplane.  The
same is true for any toric Calabi--Yau threefold, so it is customary to plot a
`toric diagram' showing just the intersection of the fan with this plane.  The
diagram for the conifold is shown in \fref{fig:conifold_fan}.
\begin{figure}[ht]
\begin{center}
    \includegraphics[width=.2\textwidth]{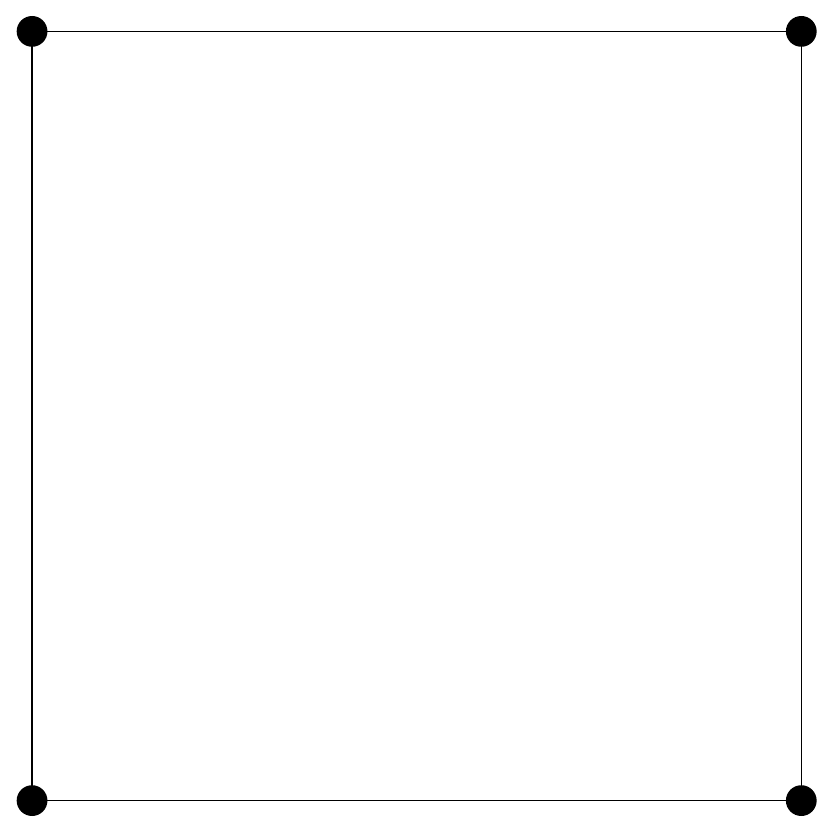}
    \caption{\small \label{fig:conifold_fan}
    The toric diagram for the conifold $\cC$.
    }
\end{center}
\end{figure}

It is also useful to introduce `homogeneous' coordinates for $\cC$, given by
the substitution
\begin{equation} \label{eq:homog_coords}
    y_1 = z_1 z_3 ,~ y_2 = z_1 z_4 ,~ y_3 = z_2 z_3 ,~ y_4 = z_2 z_4 ~,
\end{equation}
which also satisfies $p = 0$ identically.  These coordinates are subject to the
following\linebreak equivalence relation:
\begin{equation} \label{eq:rescaling}
    (z_1, z_2, z_3, z_4) \sim (\l z_1, \l z_2, \l^{-1} z_3, \l^{-1}z_4) ~,~ \l \in \IC^* ~.
\end{equation}
\subsection{Symmetries}

We have already established that $\cC$ is toric, and hence admits
a faithful action by $\big(\IC^*\big)^3$, but it actually has a much larger
symmetry group.  It is perhaps easiest to see this from the homogeneous
coordinates.  From the equivalence relation \eqref{eq:rescaling}, we see
that we are free to perform an arbitrary $GL(2,\IC)$ transformation on
$(z_1, z_2)$, and an independent one on $(z_3,z_4)$, noting that the
`anti-diagonal' $\IC^*$ subgroup acts trivially.  Finally, there is a $\IZ_2$
action which exchanges $(z_1, z_2)$ with $(z_3, z_4)$.  Altogether,
then, the symmetry group is
\begin{equation} \label{eq:sym_group}
    \frac{\big(GL(2,\IC)\times GL(2,\IC)\big)}{\IC^*}\rtimes \IZ_2~,
\end{equation}
where conjugation by the generator of $\IZ_2$ exchanges the two $GL(2,\IC)$
factors.

\newpage
%%%
\section{The \texorpdfstring{$\IZ_n$}{Zn}-hyperconifold singularities}\label{sec:classification}

\subsection{Classification of \texorpdfstring{$\IZ_n$}{Zn}-hyperconifolds}

To determine the class of cyclic hyperconifold singularities, we must find the
finite cyclic subgroups of the conifold's symmetry group that act consistently
with the conditions set out in Section~\ref{sec:intro}.  By the discussion in
Appendix \ref{app:exception}, there is precisely one hyperconifold action that
has a non-trivial projection onto the $\IZ_2$ factor of \eqref{eq:sym_group};
the following theorem describes all the other cases.

\begin{theorem} \label{th:classification}
    With the exception of the single case discussed in Appendix
    \ref{app:exception}, the\linebreak$\IZ_n$-hyperconifold singularities are obtained by
    taking quotients of the conifold by the following actions:
    \begin{equation*}
        (y_1, y_2, y_3, y_4) \mapsto (\z y_1, \z^k y_2, \z^{-k} y_3, \z^{-1}y_4)~,
    \end{equation*}
    where $\z = e^{2\pi \ii/n}$, and $k$ is relatively prime to $n$.  We may denote
    the corresponding\linebreak hyperconifold geometry by $\cC_{n,k}$.
    
    Furthermore, there are isomorphisms
    $\cC_{n,k} \cong \cC_{n,k'}$ for $k' = \pm k^{\pm1} \mod n$.
\end{theorem}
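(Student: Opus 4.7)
The plan is to reduce a general cyclic action (with trivial $\IZ_2$ projection) to a diagonal normal form using \eqref{eq:sym_group}, impose the defining conditions of a hyperconifold to pin down the weights, and then realise the claimed isomorphisms by exhibiting explicit automorphisms of $\cC$.

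First I would diagonalise. A finite-order element of $GL(2,\IC) \times GL(2,\IC)$ is semisimple, so after conjugating by an element of the identity component of $\Aut(\cC)$---which merely produces an isomorphic quotient---the generator $g$ of $\IZ_n$ may be taken to act as $z_i \mapsto \a_i z_i$ in the homogeneous coordinates of \eqref{eq:homog_coords}, with each $\a_i$ an $n$-th root of unity. Writing $\a_j = \z^{a_j}$, the induced action on $(y_1,y_2,y_3,y_4)$ has weights $w_1 = a_1 + a_3$, $w_2 = a_1 + a_4$, $w_3 = a_2 + a_3$ and $w_4 = a_2 + a_4$, which are well defined modulo the anti-diagonal equivalence \eqref{eq:rescaling}. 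Two constraints must then be imposed. Invariance of $\O$ in \eqref{eq:Omega}, which is the Gorenstein condition, requires that $w_4 = -w_1$ and $w_3 = -w_2$ modulo $n$: indeed $p$ has weight $w_1 + w_4 = w_2 + w_3$ while $dy_1 \wedge \cdots \wedge dy_4$ has weight $\sum w_i$, so $\O$ picks up $\z^{w_2+w_3}$. Isolatedness of the quotient singularity requires that, for every $0 < m < n$, the fixed locus of $g^m$ meet $\cC$ only at the origin; a short case check shows that if $m w_1 \equiv 0 \pmod n$ then the fixed plane $\{y_2 = y_3 = 0\}$ cuts $\cC$ in the two coordinate lines $\{y_1 = 0\} \cup \{y_4 = 0\}$, and symmetrically for $m w_2$, so we must have $\gcd(w_1,n) = \gcd(w_2,n) = 1$. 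Replacing $g$ by $g^{w_1^{-1} \bmod n}$ then brings the weights to the asserted form $(1,k,-k,-1)$ with $k := w_1^{-1} w_2$ coprime to $n$.

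To finish, the two isomorphisms are exhibited by direct conjugation within $\Aut(\cC)$. The $\IZ_2$ generator of \eqref{eq:sym_group} acts on the hypersurface coordinates as $y_2 \leftrightarrow y_3$ with $y_1, y_4$ fixed, and carries the weights $(1,k,-k,-1)$ to $(1,-k,k,-1)$, giving $\cC_{n,k} \cong \cC_{n,-k}$. The $GL(2,\IC)$ element swapping $z_1 \leftrightarrow z_2$ induces $y_1 \leftrightarrow y_3$ and $y_2 \leftrightarrow y_4$, and conjugates $g$ to a generator with weights $(-k,-1,1,k)$; replacing this generator by its $(-k^{-1}) \bmod n$ power restores normal form with new index $k^{-1}$, giving $\cC_{n,k} \cong \cC_{n,k^{-1}}$. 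Composing these two involutions produces all four values $\pm k^{\pm 1}$.

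The main obstacle I anticipate is the isolated-singularity step: one must not merely locate the fixed loci of every $g^m$ in the ambient $\IC^4$, but also verify that any positive-dimensional fixed component genuinely meets $\cC$ away from the origin, so as to rule out $\gcd(w_i,n) > 1$. Once this and the diagonalisability of finite-order matrices are in hand, the remainder is essentially bookkeeping modulo $n$.
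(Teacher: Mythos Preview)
Your proposal is correct and follows essentially the same route as the paper: diagonalise the generator inside the torus, use invariance of $\O$ (equivalently of $p$) to force the weights into the pattern $(w_1,w_2,-w_2,-w_1)$, use isolatedness to force each weight to be a unit mod $n$, and then renormalise the generator. The paper streamlines your isolated-singularity step by simply observing that each coordinate axis lies on $\cC$, so any non-primitive weight immediately produces a fixed line; and for $k\to k^{-1}$ it uses the swap $y_1\leftrightarrow -y_2,\ y_3\leftrightarrow y_4$ (which preserves $p$ exactly) rather than your $y_1\leftrightarrow y_3,\ y_2\leftrightarrow y_4$ (which sends $p\to -p$ but is still an automorphism of $\cC$)---both are fine.
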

\begin{proof}
The conifold's symmetry group is given by \eqref{eq:sym_group}, and the
hypersurface coordinates parametrise the $(\bf{2},\bf{2})$ representation of
this group.

The content of Lemma \ref{th:exception} is that the exceptional hyperconifold
is the only one for which the quotient group projects non-trivially to $\IZ_2$,
so we can ignore the $\IZ_2$ factor here.  Note that any finite-order element of
$GL(2,\IC)\times GL(2,\IC)$ can be conjugated into the maximal torus (its
Jordan normal form must be diagonal), and therefore can be assumed to
act on the hypersurface coordinates by phases.  Note that the action is such
that the polynomial $p$ is always mapped to a multiple of itself.

The Gorenstein condition is that the canonical divisor be Cartier, i.e. that it
correspond to a line bundle.  As the group action has a fixed point, the
canonical line bundle on $\cC$ will descend to a line bundle on $\cC/\IZ_n$
if and only if the action is trivial on the fibre over the fixed point.  This is
equivalent to the invariance of the trivialising section $\O$, as given in
\eqref{eq:Omega}:
\begin{equation*}
    \O = \oint_{p=0} \frac{dy_1\wedge dy_2 \wedge dy_3 \wedge dy_4}{p} ~.
\end{equation*}
The polynomial $p$ transforms like its constituent monomials $y_1 y_4$ and
$y_2 y_3$, whereas the numerator of this expression transforms like their
product, $y_1 y_2 y_3 y_4$.  Therefore $\O$ transforms like the polynomial
$p$ itself, so we must demand that $p$ be invariant.

Since a hyperconifold is by definition an isolated singularity, the group
must act in such a way that the origin is the only fixed point of any group
element.  Since $\cC$ contains each of the coordinate axes,
this means that each $y_i$ must transform with a \emph{primitive}
$n^{\mathrm{th}}$ root of unity.  Therefore, given any generator of $\IZ_n$,
an appropriate power of it (that also generates the group) must act as stated
in the theorem.

Finally, given $n$, not all relatively prime $k$ give distinct hyperconifolds.  The
coordinate changes $y_2 \leftrightarrow y_3$ and $y_1 \leftrightarrow -y_2 ,
y_3 \leftrightarrow y_4$ each leave $p$ invariant, and implement $k \to -k$
and $k \to k^{-1} \mod n$ respectively, so spaces related by these
transformations are isomorphic.
\end{proof}

The fact that the polynomial $p$ is
invariant under the quotient group in all cases is significant; it means that all
these group actions extend to free actions on the smoothing of the conifold,
given by $p + t = 0$ for $t \neq 0$.  They can therefore all be obtained as limits
of multiply-connected Calabi--Yau manifolds in the straightforward way which
motivated this entire investigation.

The classification of hyperconifolds by two relatively-prime integers, up to the
equivalence given, is reminiscent of the classification of three-dimensional
lens spaces, leading to a simple corollary:
\begin{corollary} \label{th:correspondence}
    There is a one-to-one correspondence between hyperconifolds $\cC_{n,k}$,
    up to isomorphism, and lens spaces $L(n,k)$, up to homeomorphism.
\end{corollary}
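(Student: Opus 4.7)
The plan is to exhibit the lens space $L(n,k)$ explicitly as the vanishing cycle of the smoothing of $\cC_{n,k}$, and then to invoke the classical classification of lens spaces to match the two equivalence relations.

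First I would identify the smoothing $\{y_1 y_4 - y_2 y_3 = 1\} \subset \IC^4$ of the conifold with the complex Lie group $SL(2,\IC)$ via the obvious matrix $M = \begin{pmatrix} y_1 & y_2 \\ y_3 & y_4 \end{pmatrix}$. The vanishing three-cycle, onto which the smoothing deformation retracts, is then the maximal compact subgroup $SU(2) \subset SL(2,\IC)$, realised as $M = \begin{pmatrix} a & b \\ -\bar b & \bar a \end{pmatrix}$ with $|a|^2 + |b|^2 = 1$. Note that since $p$ is invariant under the hyperconifold action (Theorem~\ref{th:classification}), that action extends to the smoothing, so it makes sense to ask how $\IZ_n$ acts on the vanishing $S^3$.

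Next I would lift the $\IZ_n$-action of Theorem~\ref{th:classification} to left-right multiplication $M \mapsto D_1 M D_2$ by diagonal matrices. Solving the four phase constraints gives, for instance, $D_1 = \diag(\z, \z^{-k})$ and $D_2 = \diag(1, \z^{k-1})$; the product of their determinants is $1$ (so the lift lands in $SL(2,\IC)$) and each is unitary (so the lift preserves $SU(2)$). A one-line computation then shows that the induced action on $(a,b) \in S^3$ is simply $(a,b) \mapsto (\z a, \z^k b)$, which is by definition the free $\IZ_n$-action whose quotient is the lens space $L(n,k)$. This constructs the assignment $\cC_{n,k} \mapsto L(n,k)$ at the level of pairs.

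To conclude, I would appeal to the classical Reidemeister--Franz theorem: $L(n,k) \cong L(n,k')$ as topological spaces if and only if $k' \equiv \pm k^{\pm 1} \mod n$. Since this is \emph{precisely} the equivalence on pairs $(n,k)$ that appears in Theorem~\ref{th:classification}, the map descends to a well-defined bijection between isomorphism classes of $\IZ_n$-hyperconifolds and homeomorphism classes of three-dimensional lens spaces. The only mildly non-mechanical step is finding a lift $D_1, D_2$ that simultaneously preserves $SL(2,\IC)$ and $SU(2)$ (the rescaling freedom \eqref{eq:rescaling} of the homogeneous coordinates gives exactly one parameter of slack, which is all that is needed); everything else is either a direct check or a citation.
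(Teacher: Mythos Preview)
Your argument is correct, but it is doing more than the paper does at this point. In the paper, the corollary is stated without proof as a purely formal observation: Theorem~\ref{th:classification} parametrises hyperconifolds by coprime pairs $(n,k)$ modulo $k' \equiv \pm k^{\pm1}$, and this is exactly the Reidemeister--Franz classification of lens spaces up to homeomorphism, so the sets of equivalence classes are in bijection. No geometry is invoked.

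What you have actually written is (a version of) the paper's \emph{later} theorem in Section~\ref{sec:topology}, which identifies $L(n,k)$ as the vanishing cycle of $\cC_{n,k}$. Your lift $D_1 = \diag(\z,\z^{-k})$, $D_2 = \diag(1,\z^{k-1})$ is literally the same pair of matrices the paper uses there, and the resulting $(a,b)\mapsto(\z a,\z^k b)$ is the same action. The only genuine difference is the model: the paper works on the singular cone via the Evslin--Kuperstein parametrisation $W = rXvv^\dagger$ and reads off the action on the $S^3$ factor of the base, whereas you work on the smoothing $\{p=1\}\cong SL(2,\IC)$ and retract to the maximal compact $SU(2)$. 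Your route is arguably cleaner (no auxiliary $S^2$ factor to track), and it has a small bonus: by exhibiting $L(n,k)$ as an intrinsic topological invariant of $\cC_{n,k}$, you get the ``only if'' direction of the isomorphism classification of hyperconifolds for free, which Theorem~\ref{th:classification} as stated does not explicitly supply.
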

We will see in Section~\ref{sec:topology} that this is not a coincidence.

The spaces $\cC_{n,k}$, being quotients of $\cC$ by subgroups of the torus,
are all toric; we give the description of their toric diagrams in the form of a
lemma:
\begin{lemma} \label{lem:diagrams}
    The hyperconifold $\cC_{n,k}$ corresponds to the toric diagram with vertices
    at the points
    $(1,0,0), (1,1,0), (1,k,n), (1,k+1,n)$.
\end{lemma}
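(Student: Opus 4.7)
The plan is to invoke the standard toric dictionary for quotients by torus subgroups: if a $\IZ_n$ subgroup of the torus is generated by the image of a cocharacter $\l \in \tfrac{1}{n}N$, then the quotient of the affine toric variety for a cone $\sigma$ by this $\IZ_n$ is the toric variety for the \emph{same} cone $\sigma$ in the enlarged lattice $N' := N + \IZ\,\l$. Since the four rays of the conifold's cone remain the rays of $\cC_{n,k}$, the task reduces to re-expressing $v_1, \ldots, v_4$ in a $\IZ$-basis of $N'$.

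First I would read off the cocharacter from the action. From the parametrisation \eqref{eq:toric_coords}, the coordinates $y_1, y_2, y_3, y_4$ correspond to the characters $(1,0,-1),\,(0,1,0),\,(1,-1,0),\,(0,0,1) \in M = \IZ^3$. Demanding that each pair with $n\,\l$ gives the phase exponents $1, k, -k, -1$ from Theorem~\ref{th:classification} determines $n\,\l = (0, k, -1)$ uniquely; consistency of this overdetermined system is automatic, since invariance of the defining polynomial $p$ (the Gorenstein condition) is already built into the action.

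Next I would choose a basis of $N' = \IZ^3 + \IZ\cdot\tfrac{1}{n}(0, k, -1)$ producing the claimed diagram. The element $w := \tfrac{1}{n}(0, -k, 1)$ lies in $N'$, and the relation $e_3 = k\,e_2 + n\,w$ exhibits $(e_1, e_2, w)$ as a $\IZ$-basis of $N'$ (the change-of-basis matrix has determinant $\pm n$, consistent with $[N' : N] = n$). Re-expressing each $v_i$ in this basis yields $v_1 = (1, 0, 0)$, $v_2 = (1, 1, 0)$, $v_3 = e_1 + e_3 = (1, k, n)$, and $v_4 = e_1 + e_2 + e_3 = (1, k+1, n)$, exactly as claimed.

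The argument is essentially bookkeeping once the toric dictionary is invoked; the only point requiring care is the sign convention in the choice of basis for $N'$. Taking $w = -\tfrac{1}{n}\l$ rather than $+\tfrac{1}{n}\l$ is what produces $+n$ in the third coordinate of $v_3, v_4$ and matches the orientation in the statement of the lemma; the opposite choice would yield a lattice-equivalent diagram with $-n$ in place of $+n$, representing the same toric variety but reflected.
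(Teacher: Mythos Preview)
Your proof is correct and follows essentially the same approach as the paper. The paper computes the $\IZ_n$ action directly on the toric coordinates $t_i$ rather than via the character/cocharacter pairing, but arrives at the identical lattice refinement $N' = N + \IZ\cdot\tfrac{1}{n}(0,-k,1)$ and the same new basis $(e_1,e_2,w)$, after which the re-expression of the four vertices is the same bookkeeping you carry out.
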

\begin{proof}
    The relation between the $y_i$ and the toric coordinates $t_1, t_2, t_3$ is
    given by \eqref{eq:toric_coords}; we can use this to write down the $\IZ_n$
    action, as given by the theorem, in toric terms:
    \begin{equation*}
        t_1 \to t_1~,~ t_2 \to e^{2\p\ii k/n}t_2~,~ t_3 \to e^{-2\p\ii/n}t_3 ~.
    \end{equation*}
    To take the quotient by such a discrete subgroup of the torus, we retain
    the same fan, but subdivide the lattice; for this particular action, the original
    lattice $N$ is replaced by the minimal lattice $N' \supset N$ such that
    $(0,-k/n, 1/n) \in N'$ (see \cite{CLS} for the general result).  An obvious choice of
    basis for the new lattice is $\{(1,0,0), (0,1,0), (0,-k/n, 1/n)\}$.  The fan for the
    conifold consists of a single cone with vertices at $(1,0,0),(1,1,0),(1,0,1),(1,1,1)$,
    and its faces.  Expressed in the new basis, these become
    $(1,0,0), (1,1,0), (1,k,n), (1,k+1,n)$.
\end{proof}
Note that the toric diagram for $\cC_{n,k}$ is not unique, as we can apply any
$SL(3,\IZ)$\linebreak transformation which preserves the hyperplane
in which the vertices lie.  For the purpose of plotting aesthetic diagrams, it helps
to apply a skewing transformation in order to make the diagram `as square as
possible'; two examples are shown in Figure \ref{fig:example_diagrams}.
\begin{figure}
\begin{center}
    \raisebox{3ex}{\includegraphics[width=.2\textwidth]{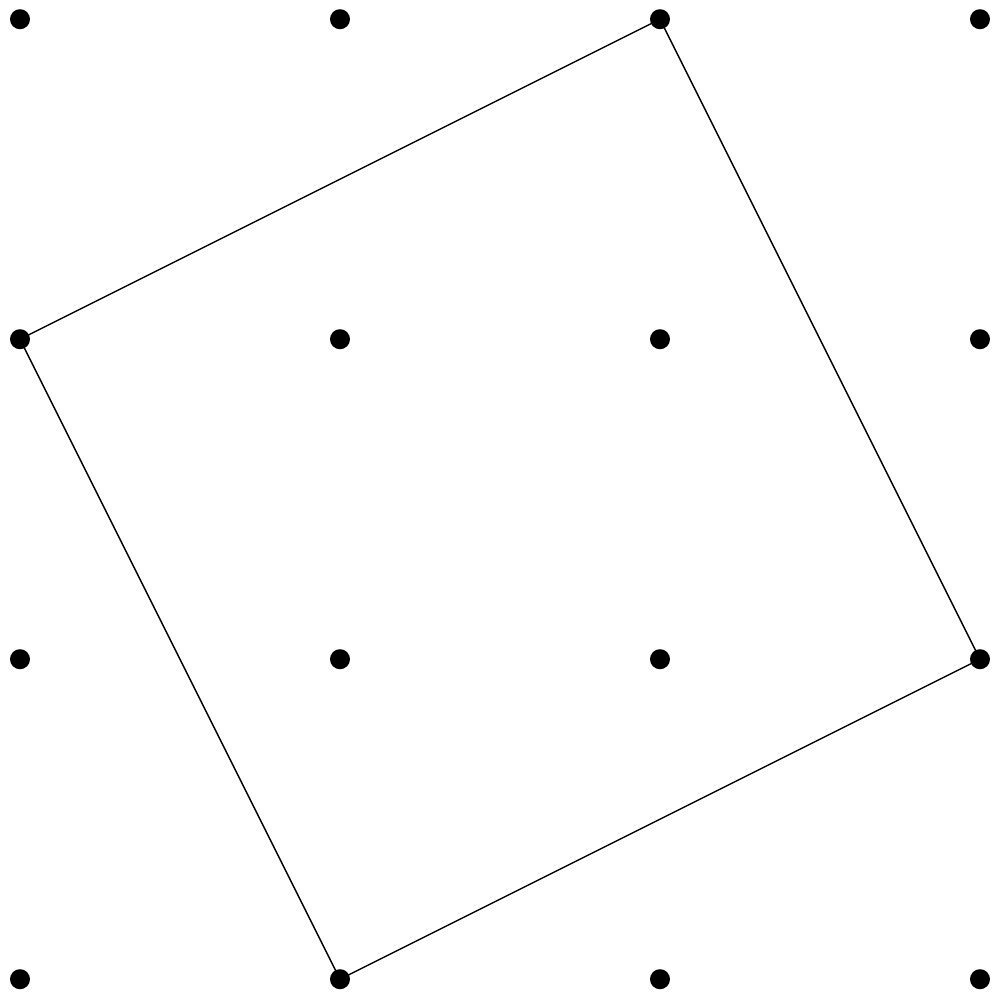}} \hspace{4em}
    \includegraphics[width=.2\textwidth]{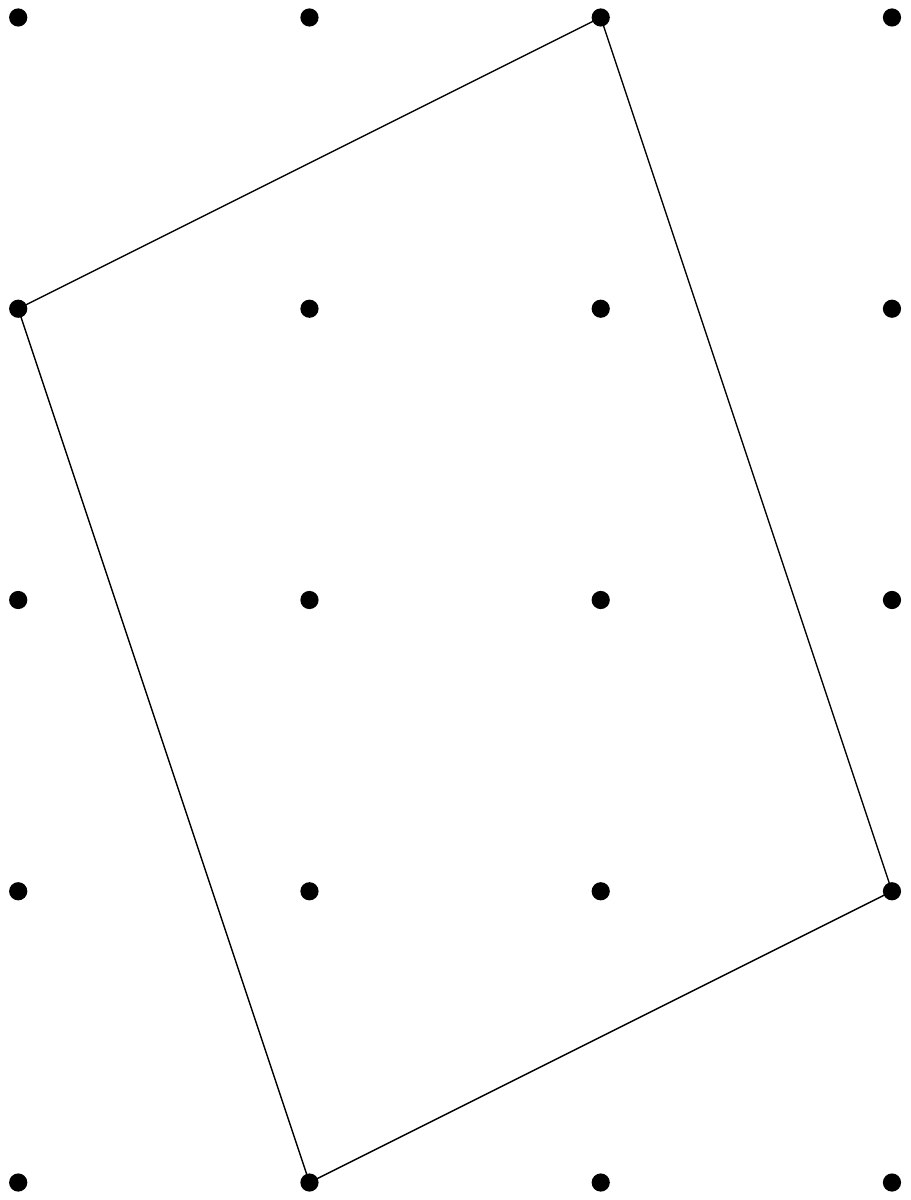}
    \caption{\small \label{fig:example_diagrams}
    Toric diagrams for the hyperconifolds $\cC_{5,2}$ and $\cC_{7,2}$;
    coordinates have been chosen to make the diagrams as clear as
    possible.}
\end{center}
\end{figure}
\begin{remark}
    Interestingly, although many values of $n$ allow inequivalent choices
    of $k$, it seems that in each case, only one value arises in compact examples.
    For instance, I know of a number of compact Calabi--Yau threefolds which
    contain $\cC_{5,2}$ singularities, but none which contain $\cC_{5,1}$.
    Similarly, $\cC_{8,3}$ arises in multiple examples, but to the best of my
    knowledge, $\cC_{8,1}$ does not occur.
\end{remark}
\subsection{Topology} \label{sec:topology}

The conifold $\cC$ is homeomorphic to the cone over $S^3\times S^2$
\cite{Candelas:1989js}.  The vanishing cycle is\linebreak homeomorphic
to $S^3$, i.e., in the smoothed geometry, described by $p + t = 0$ for
$t~\neq~0$, the\linebreak singular point is replaced by a three-sphere $S^3$.  We will
see below that for the\linebreak hyperconifold $\cC_{n,k}$, the vanishing cycle is homeomorphic to
the lens space $L(n,k)$; this is a pleasing geometric origin for the
correspondence described in Corollary \ref{th:correspondence}.

First, however, we require coordinates for the conifold that give us direct
access to its topology.  For this purpose, we will use the nice parametrisation
from \cite{Evslin:2007ux}.  The cone over $S^3\times S^2$ can be
parametrised by triples $(r, X, v)$, where $0 \leq r < \infty$ is a radial
coordinate, $X \in SU(2) \cong S^3$ is an $SU(2)$ matrix representing a
point of $S^3$, and $v \in \IC^2$, $|v| = 1$, represents a point of $\IP^1$
once we impose the equivalence $v \sim e^{i\th}v$ for all $\th \in \IR$.  To
map this space to $\cC$, we first rewrite the defining polynomial as the
determinant of a matrix:
\begin{equation*}
    p = \det W ~,~ W = \left(\begin{array}{l l} y_1 & y_2 \\ y_3 & y_4 \end{array} \right) ~.
\end{equation*}
Now we can write down the following simple map from the cone over
$S^3\times S^2$ to $\cC$:
\begin{equation*}
    W = r X v v^\dagger ~.
\end{equation*}
Note that $r^2 = \Tr(W^\dagger W) = |y_1|^2 + \ldots |y_4|^2$.  This map is shown
to be a homeomorphism in \cite{Evslin:2007ux}.

\begin{theorem}
    The vanishing cycle of $\cC_{n,k}$ is homeomorphic to the lens
    space $L(n,k)$.
\end{theorem}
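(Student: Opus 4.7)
My plan is to exhibit the vanishing cycle of a smoothing of $\cC_{n,k}$ as an explicit $\IZ_n$-invariant three-sphere and then read off the induced free action. The proof of Theorem~\ref{th:classification} shows that $p$ is invariant under the $\IZ_n$-action, so the action extends compatibly to every member of the smoothing family $\{p = t\}$. Hence a smoothing of $\cC_{n,k}$ is simply $\{p = t\}/\IZ_n$, and its vanishing cycle is the quotient of the vanishing cycle of $\{p = t\}$.

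Next I would realise the vanishing $S^3 \subset \{p = t\}$ as the real slice $y_4 = \bar y_1$, $y_3 = -\bar y_2$. On this slice $p = |y_1|^2 + |y_2|^2$, so for real $t > 0$ it cuts out the round three-sphere $\{|y_1|^2 + |y_2|^2 = t\} \subset \IC^2_{(y_1, y_2)}$. Equivalently, writing the defining matrix as $W = \sqrt{t}\, U$ with $U \in SL(2,\IC)$, the slice corresponds exactly to $U \in SU(2)$: this is the zero section of the standard identification $\{p = t\} \cong T^*S^3$, and hence the generator of $H_3$.

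I would then verify equivariance of the slice and compute the restricted action. Under $y_1 \mapsto \z y_1$ we have $\bar y_1 \mapsto \z^{-1} \bar y_1$, matching $y_4 \mapsto \z^{-1} y_4$, and similarly the relation $y_3 = -\bar y_2$ is preserved. The restricted action on the slice is therefore $(y_1, y_2) \mapsto (\z y_1, \z^k y_2)$, which is precisely the standard $\IZ_n$-action whose quotient is $L(n,k)$ by definition. Since the only fixed point on $\cC$ is the origin, the action on $S^3$ is free and the quotient is a genuine lens space.

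The only step that needs genuine care is verifying that the real slice really is the vanishing cycle; the rest is bookkeeping. I would justify it either by writing out the $T^*S^3$ structure of the smoothing explicitly (polar decomposition of $U \in SL(2,\IC)$), or by a direct deformation retraction of $\{p = t\}$ onto the slice. Note that the ambiguity $k \leftrightarrow \pm k^{\pm 1} \pmod n$ in labelling $\cC_{n,k}$ matches the classical homeomorphism equivalence for $L(n,k)$, so Corollary~\ref{th:correspondence} drops out of this argument for free.
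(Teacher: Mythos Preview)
Your argument is correct, and takes a genuinely different route from the paper's.  The paper uses the Evslin--Kuperstein parametrisation of the \emph{cone} $\cC$ as $W = r\,X\,v v^\dagger$ with $X \in SU(2)$ and $v \in S^2$, then reads off the $\IZ_n$-action on the $S^3$ factor of the link $S^3\times S^2$ by left- and right-multiplying $W$ by diagonal matrices; it takes for granted that this $S^3$ factor is the vanishing cycle.  You instead work directly in the \emph{smoothing} $\{p=t\}\cong SL(2,\IC)\cong T^*S^3$, exhibit the vanishing $S^3$ as the $SU(2)$ real form (the zero section), check its $\IZ_n$-invariance, and restrict the action there.

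What each approach buys: your argument is more self-contained---it actually identifies the vanishing cycle via the polar decomposition rather than citing a known fact about the conifold---and it sidesteps the $S^2$ factor entirely.  The paper's approach, on the other hand, simultaneously determines the action on the $S^2$ factor (a rotation, trivial when $k=1$), so it gives the full quotient link $L(n,k)\times S^2$ and not just the vanishing cycle.  Both are short; yours is perhaps the more natural reading of the theorem as stated.
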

\begin{proof}
    Recall that the group action giving $\cC_{n,k}$ is
    \begin{equation*}
        (y_1, y_2, y_3, y_4) \mapsto (\z y_1, \z^k y_2, \z^{-k} y_3, \z^{-1}y_4)
            ~,~~\mathrm{where}~ \z = e^{\frac{2\pi\ii}{n}}~.
    \end{equation*}
    This can be realised on the matrix $W$ by
    \begin{equation*}
        \left(\begin{array}{l l} y_1 & y_2 \\ y_3 & y_4 \end{array}\right) \to
            \left(\begin{array}{l l} \z & 0 \\ 0 & \z^{-k} \end{array}\right)
            \left(\begin{array}{l l} y_1 & y_2 \\ y_3 & y_4 \end{array}\right)
            \left(\begin{array}{l l} 1 & 0 \\ 0 & \z^{k-1} \end{array}\right) ~,
    \end{equation*}
    which corresponds to the following action on the $S^3$ and $S^2$ factors
    of the base:
    \begin{equation*}
        X \to \left(\begin{array}{l l} \z & 0 \\ 0 & \z^{-k} \end{array}\right)
        X \left(\begin{array}{l l} 1 & 0 \\ 0 & \z^{k-1} \end{array}\right) ~,~~
        v \to \left(\begin{array}{l l} 1 & 0 \\ 0 & \z^{1-k} \end{array}\right) v ~.
    \end{equation*}
    The action on $S^2$ is simply a rotation (trivial for $k=1$), whereas on
    the $S^3$ factor, it is precisely that action, the quotient by which
    is the lens space $L(n,k)$.
\end{proof}
\subsection{Mirror symmetry}

Mirror symmetry is a deep duality between pairs of Calabi--Yau
threefolds.\footnote{The term `mirror symmetry' is also used more widely,
for similar dualities in different dimensions and involving non-Calabi--Yau
spaces.}  Although\linebreak originally formulated for families of compact
manifolds, it can be generalised to non-compact and singular spaces.  In
\cite{Morrison:1995km}, it was conjectured that (at generic moduli) the mirror
to a\linebreak conifold transition is a conifold transition, and this seems to have
been broadly accepted since.  Although often true, we show here that the
conjecture fails in general.  $\IZ_n$-hyperconifolds are mirror to conifolds with
$n$ nodes, and the related transitions are therefore mirror to each other.
\begin{theorem} \label{th:mirror_sym}
    The mirror of a $\IZ_n$-hyperconifold is nodal, having $n$ ordinary double points.
\end{theorem}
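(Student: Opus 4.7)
My approach is to apply the Hori--Vafa/Batyrev prescription for local mirror symmetry of toric Calabi--Yau threefolds. Using the toric description from Lemma~\ref{lem:diagrams}, whose 2D toric diagram is the parallelogram $\Delta_{n,k}$ with vertices $(0,0), (1,0), (k,n), (k+1,n)$, the mirror is the family of affine hypersurfaces $\{uv = P(x,y)\}$ in $\IC^2 \times (\IC^*)^2$, where $P$ is a Laurent polynomial whose Newton polytope is $\Delta_{n,k}$. By Pick's theorem, using that $\gcd(k,n)=1$ forces each diagonal edge to carry no interior lattice points, $\Delta_{n,k}$ has $n-1$ interior and $4$ boundary lattice points; so $P$ has $n+3$ coefficients, and after quotienting by the three $\IC^*$-rescalings of $x$, $y$ and $P$ itself, the mirror moduli space has dimension $n$.

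Singular points of $uv = P$ lie at $u = v = 0$ over critical points $(x_0,y_0)$ of $P$ at which $P(x_0,y_0) = 0$. Generically no critical value vanishes and the mirror is smooth; along strata of the discriminant in moduli space, nodes appear, with the number of nodes equal to the number of simultaneously vanishing critical values. By a dimension count, at most $n$ critical values can be forced to zero simultaneously inside the $n$-dimensional moduli space, and this maximum is attained on a zero-dimensional stratum that corresponds to the mirror of the singular variety $\cC_{n,k}$ itself.

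To exhibit this $n$-node locus concretely I would first analyse the case $k=1$: the Newton polytope then has all lattice points with $x$-coordinate in $\{0, 1, 2\}$, so $P(x, y) = a + x R(y) + b x^2 y^n$ is quadratic in $x$, with $R$ of degree $n$. The critical-point equations $\partial_x P = \partial_y P = 0$ combine to the single polynomial condition $2 y R'(y) - n R(y) = 0$, of degree $n$ in $y$, giving $n$ generic nonzero solutions; the coefficients of $R$ then provide just enough remaining freedom to tune $P$ to vanish at all $n$ critical points simultaneously, yielding $n$ nodes. For general $k$ I would invoke the Hodge-theoretic reformulation of the dimension count: the toric Picard rank of a crepant resolution of $\cC_{n,k}$ is $n$ (from the same lattice-point count), and the mirror-symmetric matching with the number of smoothing parameters on the mirror side forces the singular mirror to carry exactly $n$ nodes. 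Each such singular point is an ordinary double point, since the Hessian of $uv - P$ there is block-diagonal with a rank-$2$ hyperbolic block on $(u,v)$ and the Hessian of $-P$ at a Morse critical point, both generically non-degenerate.

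The main obstacle is the general-$k$ case: Bernstein's theorem yields strictly more than $n$ critical points of $P$ when $k \neq 1$ (for instance, $7$ rather than $5$ for $\cC_{5,2}$), so establishing that \emph{exactly} $n$ of them contribute nodes at the mirror point of moduli requires either the careful dimension-count argument or its Hodge-theoretic repackaging sketched above, rather than a direct enumeration.
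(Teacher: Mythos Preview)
Your approach sets up local mirror symmetry correctly but then takes an unnecessarily circuitous route, and the general-$k$ argument is left genuinely incomplete. The missing idea is this: in the Hori--Vafa/Gross prescription, the monomials attached to \emph{interior} lattice points of the toric diagram are mirror to resolution parameters. The mirror of the \emph{singular} variety $\cC_{n,k}$ itself---as opposed to one of its crepant resolutions---is therefore obtained by keeping only the four monomials coming from the \emph{vertices} of the parallelogram. With a convenient normalisation this is
\[
F \;=\; uv \;-\; 1 \;-\; x \;-\; x^k y^n \;-\; x^{k+1} y^n~.
\]
Now the parallelogram structure forces a factorisation: $1 + x + x^k y^n + x^{k+1} y^n = (1+x)(1 + x^k y^n)$. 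Writing $f_1 = 1+x$ and $f_2 = 1 + x^k y^n$, the singular locus $F = dF = 0$ forces $u = v = 0$ and $f_1 f_2 = d(f_1 f_2) = 0$. Neither $f_i$ has a critical zero in $(\IC^*)^2$, so the only solutions are $f_1 = f_2 = 0$, i.e.\ $x = -1$, $y^n = (-1)^{k+1}$: exactly $n$ points, each an ordinary double point by a direct Hessian check. This works uniformly for all $k$.

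By contrast, you work with the full $(n{+}3)$-coefficient family of Laurent polynomials, which is mirror to the family of \emph{resolutions} of $\cC_{n,k}$, and then try to locate inside its moduli space an $n$-node stratum via a dimension count. That count is not a proof: it supplies neither existence of such a stratum nor the identification of it with the mirror of $\cC_{n,k}$, and the ``Hodge-theoretic repackaging'' you invoke for general $k$ is a restatement of the claim rather than an argument. Your acknowledged difficulty with the critical-point count for $k\neq 1$ (Bernstein gives too many) is precisely a symptom of working with the wrong polynomial. Once you restrict to the four vertex monomials and use the factorisation, no dimension-counting or case analysis on $k$ is needed.
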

\begin{proof}
    According to local mirror symmetry (following \cite{Gross:2000}), the mirror of a
    toric Calabi--Yau threefold $X$ is given by
    \begin{equation*}
        \{F(u,v,x,y) := uv - f(x,y) = 0 ~|~ (u,v)\in \IC^2 ~,~ (x,y) \in \big(\IC^*\big)^2 \} ~,
    \end{equation*}
where $f$ is a Laurent polynomial, the Newton polygon of which is the toric
diagram for $X$.

The toric diagram for $\cC_{n,k}$ was described in Lemma \ref{lem:diagrams}.
The mirror to $\cC_{n,k}$ itself\linebreak corresponds to the $f$ which is simply the sum
of the four monomials coming from the\linebreak vertices of the parallelogram; any other
choice is mirror to a (possibly partial) resolution.  So we consider the polynomial
\begin{equation*}
    F = uv - 1 - x - x^k y^n - x^{k+1}y^n = uv - (1+x)(1 + x^k y^n) =: uv - f_1 f_2 ~,
\end{equation*}
where we have defined $f_1 = 1+x$ and $f_2 = 1 + x^k y^n$.
This factorisation of $f(x,y)$ follows from the fact that the Newton polygon of
$f$ is a parallelogram.

To find the singularities, we consider $F = dF = 0$.  First, note that $dF = 0$ if
and only if $u = v = 0$, and the remaining equations become
$f_1 f_2 = d(f_1 f_2) = 0$.  There are obviously no solutions to $f_1 = df_1 = 0$,
or to $f_2 = df_2 = 0$, so the only solutions are $f_1 = f_2 = 0$, which occurs
when $x = -1 ~,~ y^n = (-1)^{k+1}$.  This gives $n$ distinct points.

It is simple to check that the Hessian of $F$ is non-degenerate at the singular
points, which are therefore ordinary double points.
\end{proof}

This simple result explains and generalises the examples of \cite{Davies:2011is},
where Batyrev's toric picture of mirror symmetry \cite{Batyrev:1994hm} was used
to show explicitly that hyperconifold transitions between certain families of compact
toric hypersurfaces correspond to conifold transitions between the mirror families.

%%%
\section{Singularity resolution and hyperconifold transitions} \label{sec:transitions}

We have seen that hyperconifold singularities occur in compact Calabi--Yau
threefolds; a natural question to ask is whether these admit resolutions which
are smooth Calabi--Yau manifolds.  In \cite{Davies:2009ub}, it was argued that
this is always the case for $\IZ_{2m}$-hyperconifolds, and it was shown to be
true for certain $\IZ_3$ and $\IZ_5$ examples in \cite{Davies:2011is}.  In this
section, I give a general argument that a factorial Calabi--Yau variety
with hyperconifold singularities always admits a Calabi--Yau resolution.

Given a singular Calabi--Yau $X_0$ and a resolution $\pi : \Xh \to X_0$, there are
two conditions to check to determine whether $\Xh$ is also Calabi--Yau: the
resolution must be \emph{crepant} (the canonical class must still vanish), and
\emph{projective} (so that $\Xh$ admits a K\"ahler metric).  The first condition is
fairly trivial, given our toric description of the hyperconifold singularities
in Section~\ref{sec:classification}: any maximal triangulation of the toric diagram
corresponds to a crepant resolution of the singularity.  Checking projectivity is
much harder.

To understand the issue, it is useful to first consider the case of the conifold,
and conifold transitions.  The non-compact space $\cC$ itself is given by
equation \eqref{eq:conifold}:
\begin{equation*}
    y_1 y_4 - y_2 y_3 = 0 ~.
\end{equation*}
This is an example of a non-factorial singularity---the local ring at the origin is
not a unique factorisation domain (UFD), because the reducible element
$y_1 y_4$ has the alternative\linebreak factorisation $y_2 y_3$.  As a result, there are
non-Cartier divisors passing through the singularity; its local divisor class group
is generated by the divisor given by $y_1 = y_2 = 0$.
Blowing up along this divisor gives the well-known small resolution of $\cC$, the
toric diagram for which is shown in Figure \ref{fig:resolved_conifold}.
\begin{figure}[hbt]
\begin{center}
    \includegraphics[width=.2\textwidth]{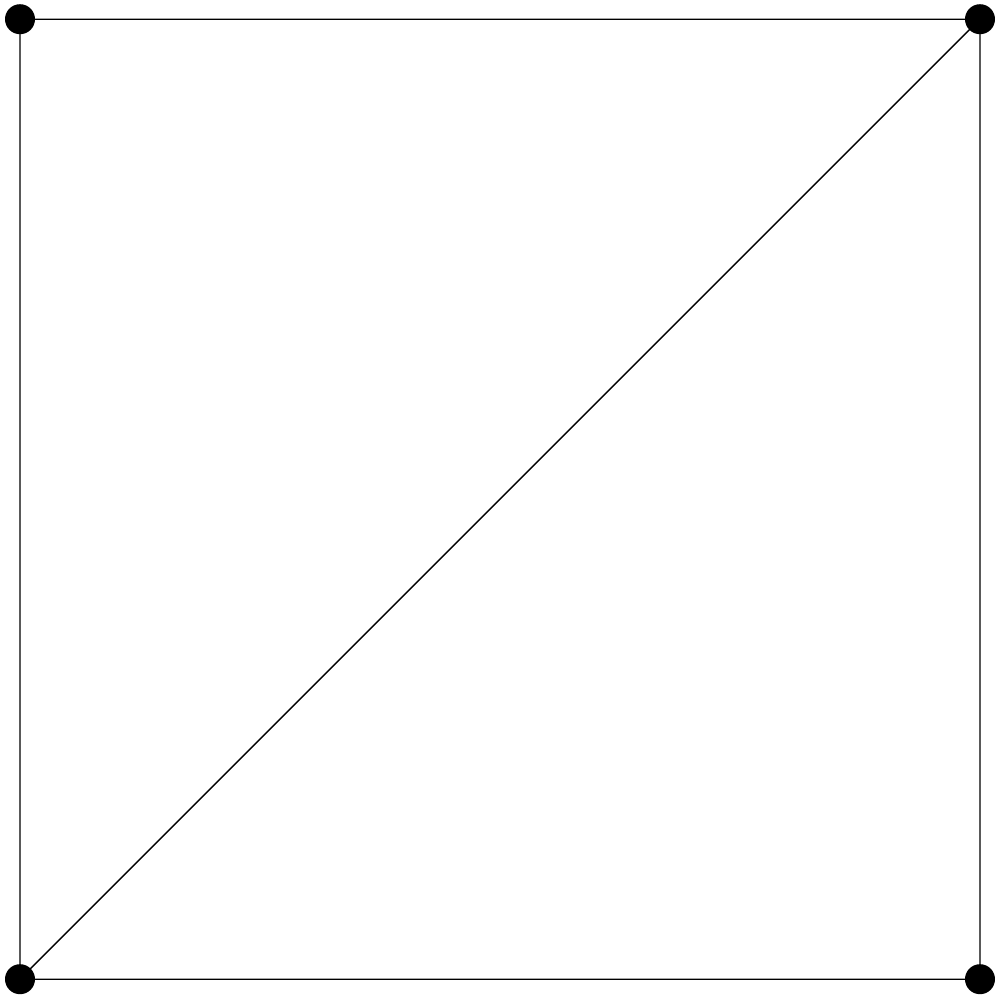}
    \caption{\small \label{fig:resolved_conifold}
    The toric diagram for the small resolution of the conifold.
    }
\end{center}
\end{figure}

Now suppose that some compact Calabi--Yau threefold $X$ contains an
ordinary double\linebreak point $x$.  Then there is some analytic neighbourhood of
$x$ which is isomorphic to a\linebreak neighbourhood of the singular point
of $\cC$.  However, because the local
divisor class group is not an analytic invariant, $X$ does \emph{not}
necessarily have any non-Cartier divisors; $\cO_{X,x}$ may be a UFD.  In
down-to-Earth terms, adding higher-order terms to \eref{eq:conifold} can restore\linebreak
factoriality of the singularity, thereby destroying the non-Cartier divisors.  We can
still perform the small resolution in the analytic category, but the resolved
manifold will not necessarily be an algebraic variety.

The above discussion shows that the existence of a projective crepant
resolution for a nodal threefold is a global issue.  Another way to see this is
to note that all divisors in the resolved conifold are non-compact, and if we are to
try to construct a K\"ahler form (or ample divisor class) on the small resolution of
$X$, we need to know the behaviour of the completions of these divisors in the
compact geometry, in order to calculate intersection numbers.  The existence of
an appropriate resolution must therefore be checked on a case-by-case basis.

The situation for hyperconifolds is very different.  We can show that under
fairly mild conditions, a smoothable Calabi--Yau threefold $X_0$ that contains
just a single hyperconifold singularity is factorial.  We will first need the
following result:
\begin{lemma} \label{lem:node_factoriality}
    Let $Y_t$ be a family of Calabi--Yau threefolds, smooth for $t\neq 0$,
    and let $Y_0$ have a single node.  Then $Y_0$ is factorial.
\end{lemma}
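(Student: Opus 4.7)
My plan is to argue by contradiction, combining the standard construction of a projective small resolution for a non-factorial node with Friedman's smoothability criterion for nodal Calabi--Yau threefolds.

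Suppose for contradiction that $Y_0$ is not factorial at its node $p$. Then there exists a Weil divisor $D \subset Y_0$ passing through $p$ that fails to be Cartier. Blowing up the ideal sheaf $\cI_D$ then yields a \emph{projective} small resolution $\pi : \widehat{Y}_0 \to Y_0$, whose exceptional locus is a single rational curve $C \cong \IP^1$ with normal bundle $\cO(-1)\oplus\cO(-1)$. Since $\pi$ is an isomorphism away from $C$ and $Y_0$ is Gorenstein with trivial canonical class, $\widehat{Y}_0$ inherits the structure of a smooth projective Calabi--Yau threefold.

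Now pick any ample class $A$ on $\widehat{Y}_0$; because $C$ is an effective curve, $A \cdot [C] > 0$, and in particular $[C] \neq 0$ in $H_2(\widehat{Y}_0, \IQ)$. On the other hand, the hypothesised smoothing family $\{Y_t\}$ of $Y_0$ may be read as a projective conifold transition starting from $\widehat{Y}_0$: contract $C$ to arrive at $Y_0$, then deform to the smooth $Y_t$. By Friedman's criterion, the existence of such a projective smoothing of a nodal Calabi--Yau threefold requires the exceptional curves of the small resolution to satisfy a relation $\sum_i \l_i [C_i] = 0$ in $H_2(\widehat{Y}_0, \IQ)$ with every $\l_i \neq 0$. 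For the single exceptional curve here, this forces $[C] = 0$, directly contradicting the previous sentence. Hence $Y_0$ must be factorial.

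The main subtlety is invoking Friedman's theorem correctly: it is a \emph{necessary} condition for projective smoothability, established via the obstruction-theoretic analysis of first-order deformations of nodal Calabi--Yau threefolds. The hypotheses are met here because every Calabi--Yau threefold in the paper is projective by the definition fixed in the Introduction, so $\{Y_t\}$ genuinely realises a projective conifold transition from $\widehat{Y}_0$ through $Y_0$ --- which is exactly the situation to which Friedman's criterion applies.
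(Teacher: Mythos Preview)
Your argument is correct and follows essentially the same route as the paper's proof: assume non-factoriality, produce a projective small resolution by blowing up a non-Cartier divisor, and then invoke Friedman's results to obtain a contradiction with smoothability. The paper states this last step tersely (``results of Friedman imply that a smoothable nodal Calabi--Yau threefold with a single node does not admit a projective small resolution''), whereas you have helpfully unpacked it by recalling the explicit homological criterion $\sum_i \l_i [C_i] = 0$ with all $\l_i \neq 0$, and observing that for a single curve this forces $[C]=0$, contradicting the positivity of the ample pairing.
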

\begin{proof}
    If $Y_0$ were \emph{not} factorial, then there would be a non-Cartier divisor
    passing through the node, and blowing up along this divisor would give a
    projective small resolution.  But $Y_0$ is smoothable by assumption, and
    results of Friedman \cite{Friedman:1986sr,Friedman:1991tt} imply that a
    smoothable nodal Calabi--Yau threefold with a single node does not admit a
    projective small resolution.
    
%    To demonstrate that $\Xt_0$ is factorial, let
%    $Y \to \Xt_0$ be a small resolution, where $Y$ is a complex manifold.  We
%    will show that $Y$ is not projective.
%    
%    The resolution map $Y \to \Xt_0$ has a single exceptional curve $C$.  A
%    result due to Friedman \cite{Friedman:1986sr,Friedman:1991tt} says that if
%    $Y \to \Xt_0$ is a small resolution of a nodal Calabi--Yau threefold, with
%    exceptional curves $C_1,\ldots,C_k$, then $\Xt_0$ is smoothable if and only
%    if there exists a relation in homology
%    %
%    \begin{equation}
%        \sum_{i=1}^k \l_i C_i \sim 0~,
%    \end{equation}
%    %
%    where $\l_i \neq 0$ for each $i$.  In the present case, $\Xt_0$ is
%    smoothable by assumption, so the above result implies $C \sim 0$.
%    Since sub-varieties of projective varieties are necessarily non-trivial in
%    homology, this proves that $Y$ is not projective, and hence that $\Xt_0$
%    is factorial.
%    
%    To prove that $X_0 = \Xt_0/G$ is also factorial, it suffices to show that the local
%    ring at the unique singular point is a UFD.  If $\tilde x \in \Xt_0$ is the node on
%    $\Xt$, and $x \in X_0$ its image in $X_0$, then the local ring $\cO_{X_0,x}$
%    is simply the $G$-invariant subring of $\cO_{\Xt_0,\tilde x}$.  This is a UFD,
%    since $\Xt_0$ is factorial, and any subring of a UFD is again a UFD.
\end{proof}

Our motivation for defining and studying hyperconifold singularities was
the possibility of a free group action developing a fixed point.  Adding a
technical assumption on the group structure, we can use the above lemma
to show that the resulting hyperconifold is factorial:
\begin{theorem} \label{th:factoriality}
    Let $X_t = \Xt_t/G$ be a family of Calabi--Yau threefolds, smooth for $t\neq 0$.
    Suppose that $\Xt_0$ is smooth except for a single $G$-orbit of nodes, the image
    of which under the quotient map is an $H$-hyperconifold singularity for some
    subgroup $H \leq G$.  Then if $H$ has a complement in $G$, $X_0$ is factorial.
\end{theorem}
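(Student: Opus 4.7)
My plan is to reduce the statement to Lemma~\ref{lem:node_factoriality} via an intermediate quotient, and then to transfer factoriality through the remaining group action. Let $K \leq G$ be a complement of $H$, so that $G = HK$ and $H \cap K = \{1\}$. For the cases of principal interest in the paper (abelian $G$, $H$ cyclic), the stabiliser in $G$ of every node on $\Xt_0$ is exactly $H$, and since $K \cap H = \{1\}$, the group $K$ acts freely on all of $\Xt_0$; combined with the fact that $|K| = |G|/|H|$ equals the size of the $G$-orbit of nodes, the $K$-action on that orbit is also transitive. The intermediate quotient $Y_t := \Xt_t/K$ is therefore a family of Calabi--Yau threefolds, smooth for $t \neq 0$, in which $Y_0$ has a single node $\tilde P$, the common image of the orbit of nodes on $\Xt_0$. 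Lemma~\ref{lem:node_factoriality} now yields directly that $Y_0$ is factorial.

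With $K$ normal in $G$ (automatic in the abelian case and arrangeable via the Schur--Zassenhaus theorem more generally), $H \cong G/K$ acts on $Y_0$ with $\tilde P$ as its only fixed point, and $X_0 = Y_0/H$. To pass factoriality down to $X_0$ I would argue by contradiction. If $D$ were a non-Cartier Weil divisor on $X_0$ passing through $P$, blowing up its reflexive ideal would yield a projective birational modification $\hat X_0 \to X_0$ that is non-trivial over $P$. Base-changing along $Y_0 \to X_0$ would correspond to blowing up the $H$-invariant pullback $\pi^{*}D$, producing a modification of $Y_0$ non-trivial over $\tilde P$; but $\pi^{*}D$ is Cartier by factoriality of $Y_0$, so that modification must be an isomorphism~-- a contradiction, forcing $D$ to have been Cartier.

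The delicate point, which I expect to be the main obstacle, is justifying the non-triviality transfer in this last argument. It is not automatic: the fibre product $\hat X_0 \times_{X_0} Y_0$ could in principle be a non-trivial modification of $Y_0$ even when $\pi^{*}D$ is Cartier, because the covering $Y_0 \to X_0$ is ramified at $\tilde P$. Equivalently, in equivariant terms, one must show that the canonical $H$-character on the fibre $\cO_{Y_0}(\pi^{*}D)_{\tilde P}$ is necessarily trivial for every $D$ pulled back from $X_0$. I would try to close this gap by combining the explicit toric description of $\cC_{n,k}$ from Lemma~\ref{lem:diagrams} with the Gorenstein condition that $H$ preserves $\O$ (and so acts trivially on the determinant of the tangent space at $\tilde P$); together with the global factoriality of $Y_0$, these constraints should rule out any non-trivial character arising from a global $D$.
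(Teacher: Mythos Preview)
Your reduction to the intermediate quotient $\Xt_0/K$ with a single node, followed by an application of Lemma~\ref{lem:node_factoriality}, is exactly the paper's approach for the first half of the proof. Where you diverge is in transferring factoriality down to $X_0$.

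The paper's argument for this second step is much shorter than yours, but it rests on a claim that is false in general. It observes that $\cO_{X'_0,x'_0}$ is a UFD (by factoriality of $X'_0 = \Xt_0/K$), and is isomorphic to $\cO_{\Xt_0,\xt_0}$ since $K$ acts freely; it then notes that the local ring of $X_0$ at the hyperconifold point is the $H$-invariant subring of $\cO_{\Xt_0,\xt_0}$, and concludes this is a UFD on the grounds that ``any subring of a UFD retains this property.'' That last assertion is simply wrong: $k[x^2, xy, y^2] \subset k[x,y]$ is the invariant ring for the $\IZ_2$-action $(x,y)\mapsto(-x,-y)$, and it is not a UFD (it is the coordinate ring of the ordinary quadric cone). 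So your instinct that this step is delicate is entirely correct, and the paper's own argument does not actually close the gap you identified.

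Two further comparative remarks. First, the paper's route avoids your normality assumption on $K$: rather than descending an $H$-action to $Y_0$, it transports factoriality back up to $\cO_{\Xt_0,\xt_0}$ via the \'etale map $\Xt_0 \to X'_0$, and only then takes $H$-invariants. This works for arbitrary $G$, whereas your formulation is effectively restricted to the abelian case. Second, your proposed fix via the toric description and the Gorenstein constraint on the $H$-character is plausible but, as you acknowledge, incomplete; the character obstruction you isolate is genuine, and the Gorenstein condition only controls the \emph{determinant} character, not the character on an arbitrary line bundle. So neither your argument nor the paper's is complete as written, but you have located the real difficulty more precisely.
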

\begin{proof}
    Let $K\leq G$ be a complement of $H$ (this means that every element of
    $G$ can be uniquely written as $hk$, where $h\in H, k\in K$), and let $\xt_0$
    be one of the nodes on $\Xt_0$.  I claim that the nodes of $\Xt_0$ are just
    the $K$-orbit of $\xt_0$.  Indeed, since $X_0$ is smooth except for an
    $H$-hyperconifold, the stabiliser of $\xt_0$ is isomorphic to $H$, and therefore
    by the orbit-stabiliser theorem, the cardinality of $G\xt_0$ is $|G|/|H| = |K|$.
    Now, if we let $k_1 \xt_0 = k_2 \xt_0$, then
    $k_2^{-1}k_1 \in \mathrm{Stab}(\xt_0) = H$.  $K$ is a complement of $H$,
    so $K\cap H$ is just the identity, and therefore $k_1 = k_2$.  We conclude
    that $K\xt_0 = G\xt_0$, and this is exactly the nodes of $\Xt_0$ by assumption.
    
    We deduce from the above that the partial quotient $X'_0 := \Xt_0/K$ is
    smooth except for a single node $x'_0$, which is the image of the nodes on
    $\Xt_0$ under the quotient map.  It is also smoothable---its smoothing is
    given by the $K$-quotient of $\Xt_t$---so by Lemma \ref{lem:node_factoriality},
    $X'_0$ is factorial.
    
    Now note that $\cO_{X'_0,x'_0}$ is a UFD, since $X'_0$ is factorial, and is
    isomorphic to $\cO_{\Xt_0,\xt_0}$ since $K$ acts freely on $\Xt_0$.
    Finally, the local ring at the singular point of $X_0$
    is isomorphic to the $H$-invariant subring of $\cO_{\Xt_0, \xt_0}$, and is
    therefore also a UFD (any subring of a UFD retains this property).  $X_0$
    is smooth elsewhere, so it is factorial. 
\end{proof}

It is therefore natural to restrict ourselves to \emph{factorial} varieties with
hyperconifold\linebreak singularities, and we will see that in this case, the existence
of a projective crepant resolution\linebreak is automatic.  We therefore take the definition
of a hyperconifold \emph{transition} to include\linebreak factoriality of the intermediate
singular variety:

\begin{definition}
    Let $X$ be a smooth Calabi--Yau threefold, which can be deformed to a
    factorial variety $X_0$, which is smooth except for a single
    $\IZ_n$-hyperconifold singularity.  If there is a projective crepant resolution
    $\Xh \to X_0$, then $X$ and $\Xh$ are said to be connected by a
    \emph{hyperconifold transition}, denoted by
    $X \stackrel{\IZ_n}{\rightsquigarrow} \Xh$.
\end{definition}
\begin{remark}
    The factoriality condition will be important for several arguments in the following
    sections.  I suspect that if a smoothable Calabi--Yau threefold is smooth except
    for a single hyperconifold singularity, then it is automatically factorial (as in the
    nodal case), but I do not know how to prove this outside of the restrictive
    assumptions of Theorem \ref{th:factoriality}.  In Section~\ref{sec:examples},
    we will see a smoothable example where factoriality does not hold, but there
    are multiple singularities.  In such cases, the results on resolutions given in
    the following sections do not apply.
\end{remark}
\subsection{Projective crepant resolutions}

Given a smooth family of Calabi--Yau manifolds $X$, degenerating to a factorial
variety $X_0$ with a hyperconifold singularity, we want to show that $X_0$
admits a Calabi--Yau resolution.  The first step is to understand the resolutions
of the non-compact spaces $\cC_{n,k}$ themselves.  Given values of $n$ and
$k$, we seek a resolution which is both \emph{crepant} and \emph{projective}, i.e.,
a smooth space $\widehat{\cC}_{n,k}$ with $K_{\widehat{\cC}_{n,k}} \sim 0$,
and a projective morphism $\pi : \widehat{\cC}_{n,k} \to \cC_{n,k}$.  These
conditions ensure that the resolved space is again Calabi--Yau.
\begin{theorem} \label{th:local_resolution}
    Every hyperconifold $\cC_{n,k}$ has a projective crepant resolution.
\end{theorem}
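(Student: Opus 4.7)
The plan is to exploit the toric description of $\cC_{n,k}$ from Lemma~\ref{lem:diagrams} and reduce the construction of a projective crepant resolution to finding a suitable triangulation of its two-dimensional toric diagram $P$, the parallelogram with vertices $(0,0), (1,0), (k,n), (k+1,n)$ inside the hyperplane $\{x_1=1\}$. Any subdivision of the defining cone of $\cC_{n,k}$ obtained by adding rays through lattice points of $P$ keeps all ray generators in this hyperplane, so by the standard characterisation of Calabi--Yau toric varieties the resulting refinement is automatically \emph{crepant}. The two remaining conditions are smoothness (which translates to unimodularity of the 2D triangles) and projectivity of the resolution morphism (which translates to regularity, i.e.\ coherence, of the triangulation).

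For the combinatorics of $P$, the key observation is that since $\gcd(k,n)=1$ the four edges of $P$ contain no lattice points beyond the four vertices; the area is $n$, so by Pick's theorem there are $n-1$ interior lattice points and $n+3$ lattice points in total. I would then note that in any maximal triangulation of $P$ (one using every lattice point as a triangle vertex), each triangle contains only its own vertices as lattice points, so Pick's theorem forces its area to be $\tfrac{1}{2}$. Equivalently, the three primitive generators of the corresponding three-dimensional cone form a $\IZ$-basis of the ambient lattice, so the refined fan is smooth. Hence any maximal lattice-point triangulation of $P$ produces a smooth, crepant refinement.

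The main obstacle is projectivity. By standard toric geometry (see \cite{CLS}), the morphism $\pi : \widehat{\cC}_{n,k} \to \cC_{n,k}$ is projective precisely when the triangulation is \emph{regular}, i.e.\ arises as the projection of the lower faces of a lifting of the lattice points of $P$ to $\mathbb{R}$. To produce such a triangulation, I would use the height function $\omega(p) := \|p\|^2$, perturbed generically to break ties. Because $\omega$ is strictly convex, every lattice point of $P$ appears as a vertex of the lower envelope of the lifted polytope, so the induced triangulation is maximal; by construction it is regular. (Any number of alternative constructions work, e.g.\ an explicit ``staircase'' triangulation together with a hand-written convex support function.)

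Assembling the pieces gives the theorem: the triangulation just described induces a refinement of the fan of $\cC_{n,k}$ whose associated toric morphism $\pi : \widehat{\cC}_{n,k} \to \cC_{n,k}$ is smooth (from unimodularity), crepant (from all rays lying in $\{x_1=1\}$), and projective (from regularity). The conceptual point, in contrast with the nodal case discussed in Section~\ref{sec:transitions}, is that everything is forced at the local toric level, and no global input is needed to certify existence of the resolution.
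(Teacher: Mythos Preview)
Your argument is correct, and the overall toric strategy---crepancy from rays staying in the hyperplane $\{x_1=1\}$, smoothness from the area-$\tfrac12$ (unimodularity) condition via Pick's theorem---matches the paper almost exactly. The one genuine difference is in how projectivity is secured. The paper proceeds algorithmically: it performs an iterated sequence of \emph{star subdivisions}, one for each interior lattice point, and invokes the result (Proposition~11.1.6 of \cite{CLS}, quoted here as Lemma~\ref{lem:star_subdivision}) that each star subdivision is a projective morphism with $-kD_v$ relatively ample for the new exceptional divisor $D_v$; projectivity of the composite follows. You instead take the global route of producing a \emph{regular} (coherent) triangulation in one shot via the strictly convex height function $\|p\|^2$ (generically perturbed), which is equally valid and arguably cleaner. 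What the paper's approach buys is made explicit in the Remark following the proof: the star-subdivision construction immediately exhibits a $\pi$-ample divisor supported entirely on the exceptional set, and this is precisely what is needed in the proof of Theorem~\ref{th:global_resolution} to pass from the local to the global resolution. Your regular-triangulation argument also yields such a divisor (the support function of the triangulation defines a torus-invariant $\pi$-ample class, which modulo pullback is supported on the exceptional components), but you would need to say this explicitly if you want to feed into the later global argument.
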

The spaces $\cC_{n,k}$ are toric, and the proof of this theorem is a
straightforward application of results from \cite{CLS}, which I will repeat here for
convenience.  First, we need the notion of a star subdivision of a fan.
\begin{definition}[\!\!\cite{CLS}, Section~11.1]
    Given a lattice $N$ and a fan $\S$ in the associated real vector space
    $N_{\IR}$, the \emph{star subdivision} of $\S$ with respect to some
    primitive lattice vector $v \in |\S| \cap N$ is denoted by $\S^*(v)$, and is a fan
    consisting of the following cones:
    \begin{enumerate}
        \item
        $\s \in \S$ such that $v \notin \s$.
        \item
        Cones generated by $v$ and $\tau$, where $\tau \in \S$,
        $v \notin \t$, and $\{v\}\cup \tau \subset \s$ for some $\s \in \S$.
    \end{enumerate}
\end{definition}
Roughly speaking, we add the one-dimensional cone generated by $v$, and
subdivide any cones in $\S$ which contain it, in the minimal way which again
yields a fan.  Star subdivisions have the nice property that they give rise to
projective morphisms:
\begin{lemma} \label{lem:star_subdivision}
    Let $\S'$ be the star subdivision of $\S$ with respect to some lattice vector
    $v$.  Then there exists a natural surjective morphism between the corresponding
    toric varieties, $\phi~:~X_{\S'}~\to~X_{\S}$, with the following properties:
    \begin{enumerate}
        \item
        $\phi$ is a projective morphism.
        \item
        If $D_v$ is the toric divisor on $X_{\S'}$ that corresponds to the one-dimensional
        cone generated by $v$, then $-kD_v$ is ample relative to $\phi$ for
        some $k>0$.
    \end{enumerate}
\end{lemma}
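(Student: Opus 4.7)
The plan is to produce the morphism $\phi$ from the refinement $\S' \to \S$, then to exhibit a $\phi$-ample Cartier divisor proportional to $-D_v$, from which both conclusions follow in one stroke.

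First I would construct $\phi$. Because every cone of $\S'$ is contained in some cone of $\S$ (immediate from the definition of star subdivision), the identity map on the lattice $N$ is compatible with the two fans and induces an equivariant toric morphism $\phi: X_{\S'} \to X_{\S}$ extending the identity on the dense torus. The support is unchanged, $|\S'| = |\S|$, so $\phi$ is surjective.

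The heart of the argument is the construction of a $\phi$-ample divisor of the form $-kD_v$. I would introduce the piecewise linear function $\varphi$ on $|\S'|$ determined by setting $\varphi(v) = 1$ and $\varphi(u_\rho) = 0$ on every other primitive ray generator $u_\rho$ of $\S'$, then extending linearly on each cone. Well-definedness on common faces of adjacent cones follows from the explicit description of $\S^*(v)$: two new cones containing $v$ meet along a face on which $\varphi$ is interpolated from the same data, while faces not containing $v$ carry $\varphi \equiv 0$. After clearing denominators, some positive multiple $k\varphi$ is integral, and under the standard sign convention $D_\psi = -\sum_\rho \psi(u_\rho) D_\rho$ it defines the torus-invariant Cartier divisor $-kD_v$.

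What remains is to verify that $k\varphi$ is strictly convex relative to $\phi$, which by the toric dictionary (CLS Theorem 6.1.14 and its relative variant) is equivalent to $-kD_v$ being $\phi$-ample. On any cone $\sigma \in \S$ with $v \notin \sigma$, the refinement is trivial and there is nothing to check. On a cone $\sigma$ containing $v$, the top-dimensional cones of $\S'$ subdividing $\sigma$ have the form $\mathrm{cone}(v,\tau)$ for $\tau$ a facet of $\sigma$ not containing $v$; across any wall between two such subcones the graph of $\varphi$ bends strictly upwards at $v$, since $\varphi$ vanishes on the surrounding boundary generators but takes value $1$ at the interior ray, yielding strict convexity in the sense relevant for ampleness. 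The main obstacle is this combinatorial check together with keeping sign conventions straight so the relatively ample divisor emerges as $-kD_v$ rather than $+kD_v$; once this is settled, the existence of the $\phi$-ample divisor $-kD_v$ delivers both projectivity of $\phi$ and statement (2) at once.
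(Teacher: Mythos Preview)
Your argument is correct and is precisely the standard proof: construct the refinement morphism, write down the support function that takes value $1$ at $v$ and $0$ on the other ray generators, and verify relative strict convexity cone-by-cone to conclude that $-kD_v$ is $\phi$-ample. The paper, however, does not supply its own proof of this lemma at all; it simply refers the reader to the proof of Proposition~11.1.6 in Cox--Little--Schenck, which carries out exactly the construction you have outlined. So you have effectively reproduced the cited argument rather than diverged from the paper's approach.
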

\begin{proof}
   See the proof of Proposition 11.1.6 in \cite{CLS}.
\end{proof}
The final concept we need is the multiplicity of a simplicial cone:
\begin{definition}
    Let $\s$ be a $d$-dimensional convex simplicial cone in $N_{\IR} = N\otimes_\IZ \IR$,
    generated by $u_1,\ldots, u_d \in N$.  If $\{e_1,\ldots,e_d\}$ is some basis for $N$,
    and $u_i = \sum_j a_{ij} e_j$, then the \emph{multiplicity} of $\s$ can be defined as
    $\mathrm{mult}(\s) = |\det(a_{ij})|$.
\end{definition}
Obviously $\mathrm{mult}(\s)$ is a positive integer, and $\s$ corresponds to a smooth
toric variety if and only if $\mathrm{mult}(\s) = 1$.  With these preparations in place,
we can now prove Theorem \ref{th:local_resolution}.
\begin{proof}[Proof of Theorem \ref{th:local_resolution}]
    Start with the fan $\S$ for $\cC_{n,k}$; recall from Lemma \ref{lem:diagrams} that
    it consists of a single three-dimensional (3D) cone $\s$, with vertices at
    $(1,0,0), (1,1,0),(1,k,n), (1,k+1,n)$, and its faces.
    
    One can easily check that for each $m \in \IZ$ satisfying $0 < m < n$, there is
    a unique $l \in \IZ^+$ such that $(1,l,m)$ is in the interior of $\s$.  There are therefore
    $n-1$ primitive lattice vectors of this form; label them $v_1,\ldots,v_{n-1}$.  Let
    $\S_1 = \S^*(v_1)$ be the star subdivision of $\S$ with respect to the first of these.
    Then, since $\s$ is a cone over a parallelogram, and $v_1$ lies in its interior,
    $\S_1$ consists of four simplicial 3D cones and their faces (see Figure
    \ref{fig:C52_resolution} for an example).
    
    Now sequentially perform star subdivisions with respect to the vectors
    $v_2,\ldots,v_{n-1}$,\linebreak defining $\S_i = \S_{i-1}^*(v_i)$.  I claim that $\S_{n-1}$ is
    a smooth fan.  To prove this, consider the vector $v_i$ in relation to $\S_{i-1}$
    for $i\geq 2$.
    There are only two possibilities: $v_i$ lies in the relative interior of either a 3D
    cone or a 2D cone in $\S_{i-1}$.  In the former case, we obtain $\S_{i}$ by
    removing the 3D cone and replacing it with three new simplicial 3D cones.  In
    the latter case, the 2D cone containing $v_i$ must be the intersection of two
    3D cones; we obtain $\S_{i}$ by removing them, and replacing each of them
    with two new simplicial 3D cones.  Either way, $\S_{i}$ is again a simplicial
    fan, with the same support as $\S_{i-1}$, but with a total of two more 3D cones.
    
    So $\S_{n-1}$ consists of $4 + 2\times(n-2) = 2n$ simplicial 3D cones and their
    faces.  Furthermore, each 3D cone is generated by primitive lattice vectors
    of the form $(1,l,m)$.  It is a trivial calculation to confirm that the multiplicity of
    such a cone is equal to twice the area of the triangle given by intersecting it
    with the hyperplane on which the vertices lie.  The original cone $\s$
    intersected this hyperplane in a lattice parallelogram of area $n$, and we have
    subdivided it into $2n$ lattice triangles, with areas which must take integral or
    half-integral values.  The only possibility is for each triangle to have area equal
    to $1/2$, so the corresponding cones all have multiplicity one.  Therefore
    $\S_{n-1}$ is a smooth fan.
    
    The resolution constructed here is manifestly crepant, since each new
    primitive lattice vector lies on the same hyperplane as the original vertices.
    It is also projective, because a star subdivision corresponds to a projective
    morphism (Lemma \ref{lem:star_subdivision}), and a composition of
    projective morphisms is projective.
\end{proof}
\begin{remark}
    It follows from Lemma \ref{lem:star_subdivision} that a relatively ample divisor
    for the resolution is given by some negative linear combination of components
    of the exceptional divisor.  These all project to the singular point of $\cC_{n,k}$
    under the resolution map.  This should be contrasted with the case of the small
    resolution of the conifold, for which any relatively ample divisor projects to a
    divisor.
\end{remark}
\begin{figure}
\begin{center}
    \begin{tabular}{c @{\hskip 5em} c}
        \includegraphics[width=.25\textwidth]{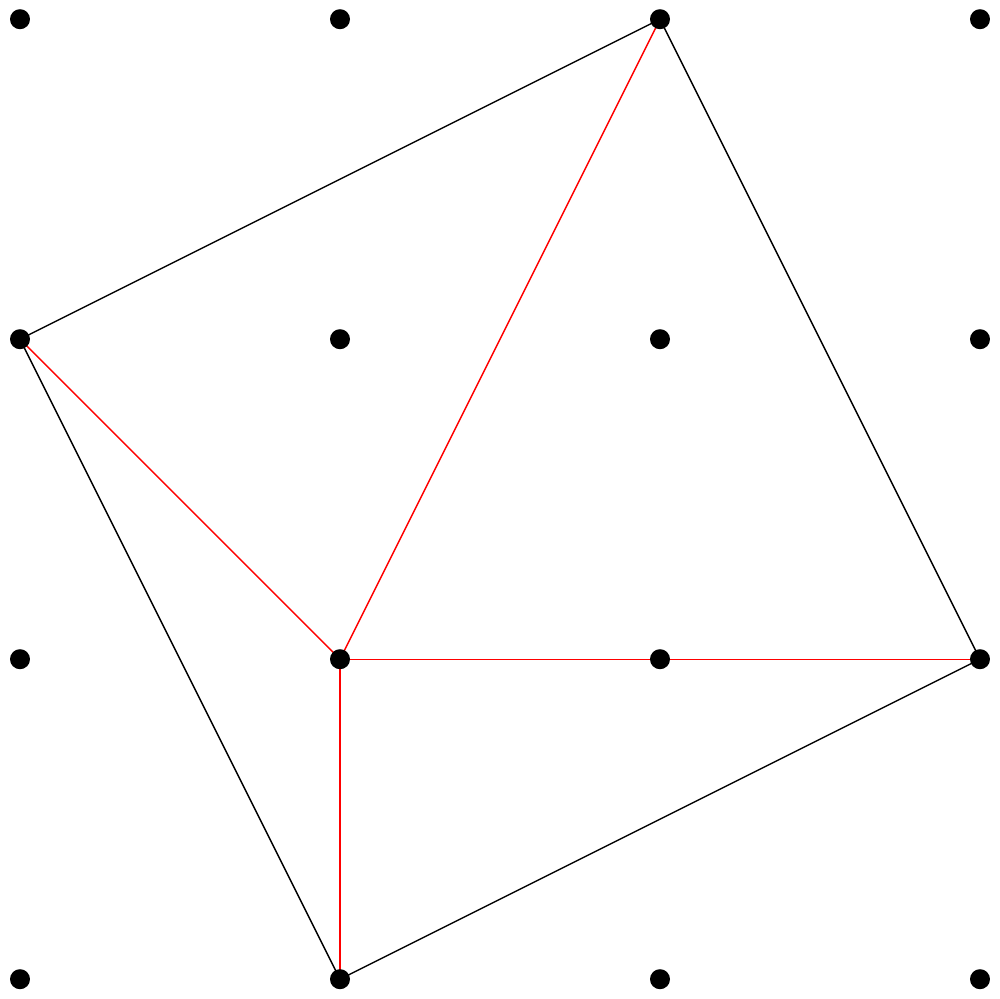} &
            \includegraphics[width=.25\textwidth]{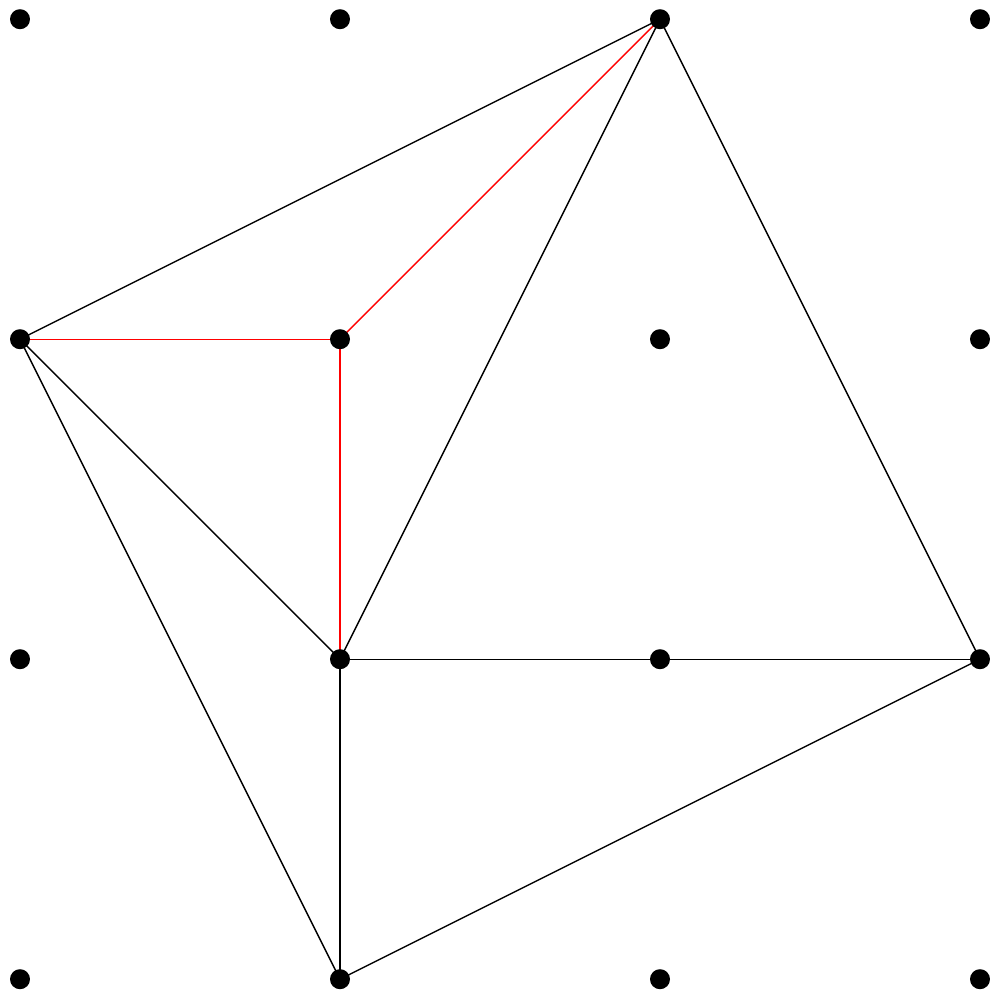}\\[10ex]
        \includegraphics[width=.25\textwidth]{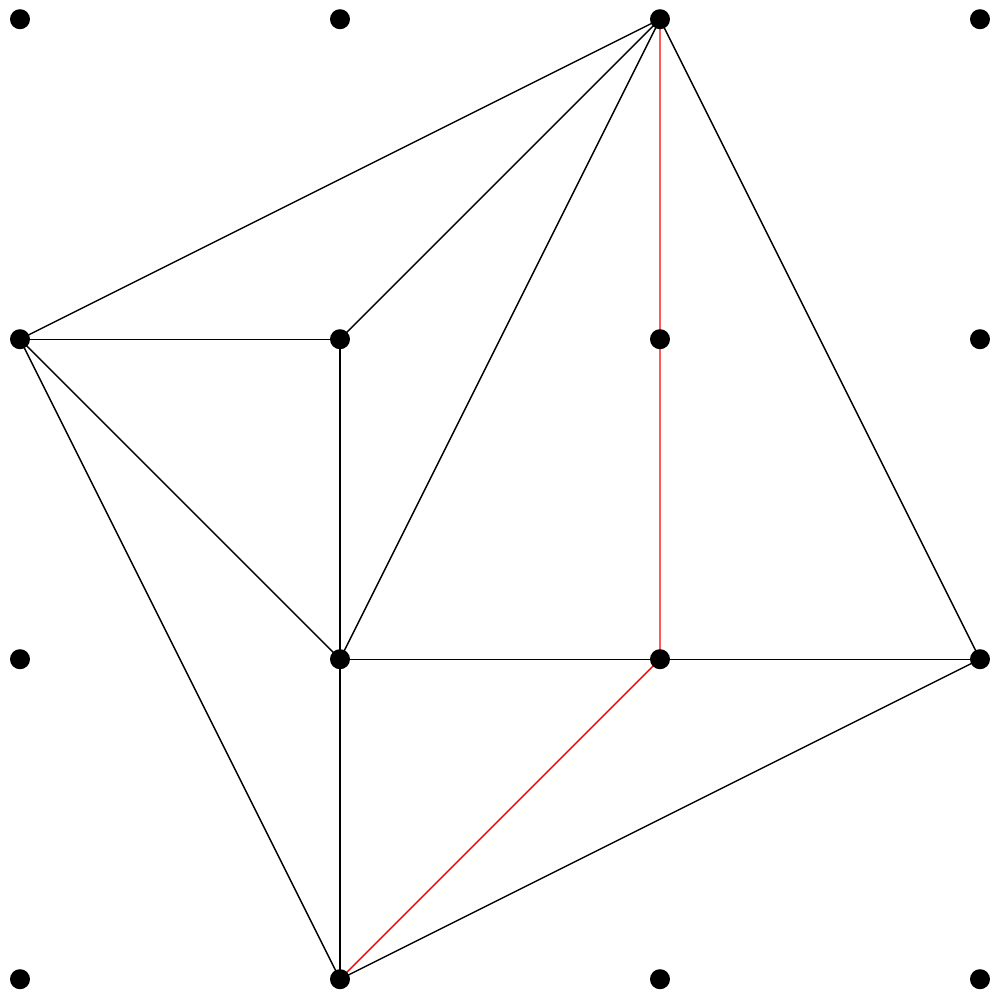} &
            \includegraphics[width=.25\textwidth]{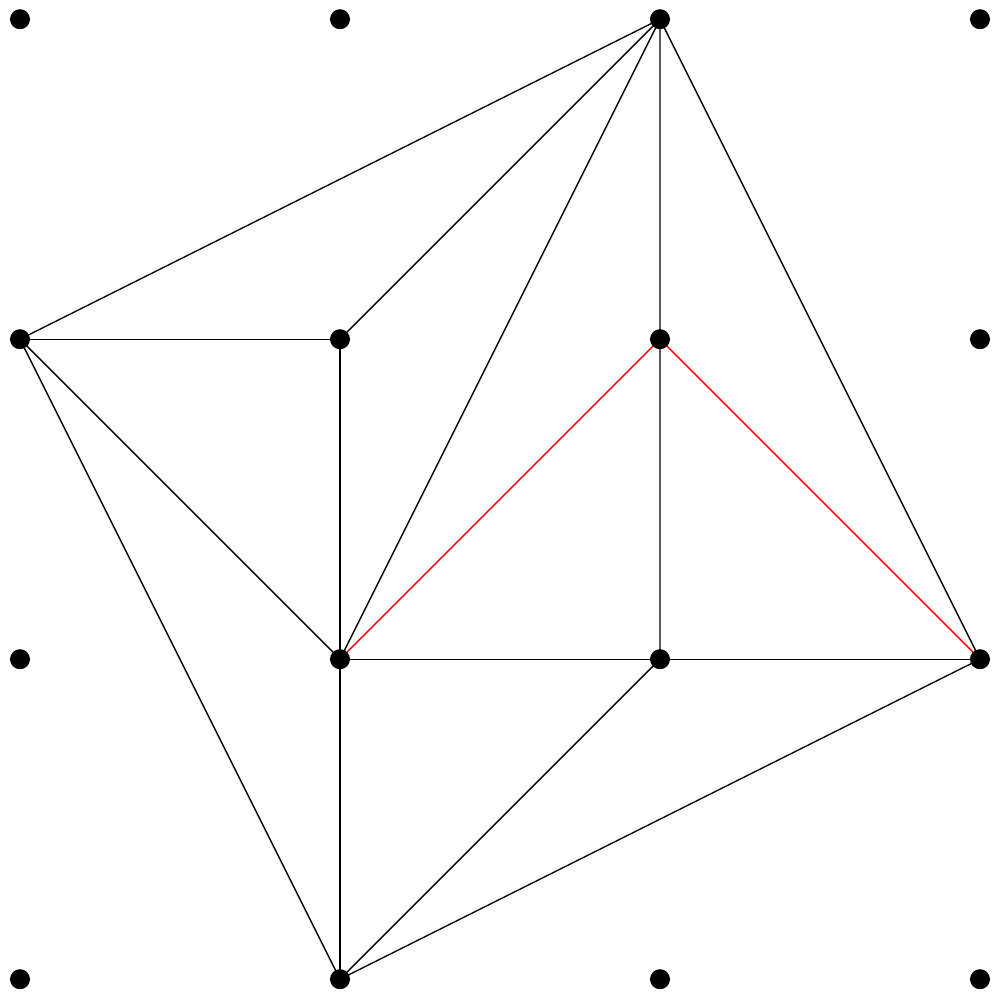}
    \end{tabular}
    \caption{\small \label{fig:C52_resolution}
        An illustration of the resolution of $\cC_{5,2}$, following the process described in
        the proof of Theorem \ref{th:local_resolution}.  In this case it consists of a sequence
        of four star subdivisions.  Note that, once again, the coordinates
        used to make the graphics are different to those used in the text, in order to make
        the diagrams clearer.
    }
\end{center}
\end{figure}

To complete the proof that hyperconifold transitions occur between compact
Calabi--Yau threefolds, we must argue that the local resolutions constructed
above can always be glued into the compact geometries in an appropriate
way (this is the step which can fail for small resolutions of a nodal threefold).

\begin{theorem} \label{th:global_resolution}
    Let $X_0$ be a factorial Calabi--Yau threefold, smooth except for a single\linebreak
    hyperconifold singularity.  Then $X_0$ admits a Calabi--Yau (i.e., projective
    and crepant)\linebreak resolution.
\end{theorem}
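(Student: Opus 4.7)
The plan is to realize the local projective crepant resolution $\pi_{\mathrm{loc}}: \widehat{\cC}_{n,k} \to \cC_{n,k}$ from Theorem \ref{th:local_resolution} as a sequence of global blow-ups on $X_0$. Since the singularity is isolated, some analytic (or \'etale) neighborhood of the singular point $x \in X_0$ is isomorphic to a neighborhood of the vertex of $\cC_{n,k}$. The local resolution is the composition of star subdivisions $\S \to \S_1 \to \ldots \to \S_{n-1}$, each of which corresponds to a blow-up along a closed subscheme lying over the vertex of $\cC_{n,k}$. Each such blow-up therefore makes sense as a global algebraic operation on the corresponding iterate over $X_0$, carried out along a closed subscheme supported over $x$ and producing an isomorphism elsewhere. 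Iterating yields a proper birational morphism $\pi: \widehat{X}_0 \to X_0$ that restricts to $\pi_{\mathrm{loc}}$ on an analytic neighborhood of $x$ and is an isomorphism over $X_0 \setminus \{x\}$. Crepancy of $\pi$ is a local condition at the singular fibre, already verified in Theorem \ref{th:local_resolution}.

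It remains to show that $\widehat{X}_0$ is projective. By Lemma \ref{lem:star_subdivision}, each star subdivision is a projective morphism, and the morphism $\pi: \widehat{X}_0 \to X_0$ is their composition, hence is projective. Combining the relative ample classes across the sequence of blow-ups yields a $\pi_{\mathrm{loc}}$-ample divisor of the form $-\sum_i m_i E_i$, where the $E_i$ are the components of the exceptional divisor of $\pi_{\mathrm{loc}}$ and the $m_i$ are positive integers. The decisive observation is that each $E_i$ is contracted by $\pi$ to the point $x$, so its closure in $\widehat{X}_0$ is a \emph{compact} divisor; consequently the local divisor extends trivially (by zero away from the exceptional locus) to a global Cartier divisor $A$ on $\widehat{X}_0$, which is $\pi$-ample by construction. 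If $L$ denotes the class of an ample line bundle on the projective variety $X_0$, then by the relative version of Kleiman's criterion (cf.\ \cite{CLS}), $A + m\pi^*L$ is ample on $\widehat{X}_0$ for all sufficiently large $m$, so $\widehat{X}_0$ is projective.

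The main conceptual point — and the feature which distinguishes this argument from the analogous (and generally false) statement for small resolutions of nodes — is precisely the compactness of the exceptional divisor components. For a conifold small resolution, the candidate $\pi$-ample divisor is supported on a \emph{non-compact} toric divisor of $\widehat{\cC}$, and its global extension to a Cartier divisor on the compactification depends on non-factoriality of the ambient singular threefold; it is exactly this step that can fail. For a hyperconifold, by contrast, every component of the exceptional locus is contracted to the singular point, so the extension is automatic and no further input beyond the local toric data is required. The anticipated main obstacle is therefore verifying carefully that this compactness property really does guarantee a global Cartier extension, rather than merely an analytic one; the construction of $\widehat{X}_0$ as an iterated algebraic blow-up makes this transparent, since each $E_i$ arises as the exceptional divisor of an honest algebraic blow-up on the global space.
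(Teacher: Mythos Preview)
Your approach is sound and rests on the same decisive observation as the paper: the $\pi$-ample divisor for the local resolution is supported entirely on the exceptional set, which is contracted to the singular point, so it globalises automatically.  The paper's argument differs in its technical packaging: rather than realising the resolution as a sequence of global algebraic blow-ups, it simply glues the toric resolution in analytically, observes that the result is an algebraic space bimeromorphic to a variety, and then invokes the relative Nakai--Moishezon criterion for algebraic spaces to verify that the transported exceptional divisor is $\pi$-ample.  This sidesteps the question of whether the local blow-up centres admit global algebraic incarnations, at the cost of passing through the category of algebraic spaces.

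The one step in your argument that deserves more care is the sentence ``Each such blow-up therefore makes sense as a global algebraic operation on the corresponding iterate over $X_0$.''  The singularity on $X_0$ is only \emph{analytically} (or formally, or by Artin approximation \'etale-locally) isomorphic to the vertex of $\cC_{n,k}$, so the blow-up ideal is a priori only defined in the completion $\widehat{\cO}_{X_0,x}$, not in the Zariski local ring $\cO_{X_0,x}$.  The fix is straightforward: since the composite $\widehat{\cC}_{n,k}\to\cC_{n,k}$ is projective, it is the blow-up of a single ideal $I$ cosupported at the vertex, hence $\mathfrak{m}$-primary; such an ideal is determined by its image in $\cO/\mathfrak{m}^N$ for some $N$, and this Artinian quotient is unchanged under completion, so $I$ transports to an honest ideal of $\cO_{X_0,x}$ and thence to a global ideal sheaf.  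With this adjustment your argument goes through, and indeed neither your proof nor the paper's makes essential use of the factoriality hypothesis at this stage---that assumption only becomes important later, for the Hodge-number and local-K\"ahler-cone computations.
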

\begin{proof}
    The singular point of $X_0$ has an analytic neighbourhood that is isomorphic
    to a neighbourhood of the singular point of $\cC_{n,k}$.  We can therefore
    analytically glue in the exceptional set of some projective resolution
    $\widehat\cC_{n,k}$, and obtain a crepant analytic resolution $\pi : \Xh \to X_0$.
    Being bimeromorphic to a variety, $\Xh$ is an algebraic space.
    
    Recall that a relative ample divisor for the resolution
    $\widehat \cC_{n,k} \to \cC_{n,k}$ can be constructed from components of the
    exceptional divisor $E$; let us choose such a divisor $D$, and abuse
    notation by also denoting by $D$ its image under the embedding
    $E \into \Xh$.  We now use the relative Nakai-Moishezon criterion for
    algebraic spaces \cite{Lazarsfeld,Kollar:1990pr}: $D$ is $\pi$-ample if and
    only if $D^{\dim V}\cdot V > 0$ for all irreducible closed subspaces $V$ for
    which $\pi(V)$ is a point.  The only such $V$ are subspaces of the exceptional
    set $E$, and we know that $D$ is positive on these, since it is a relative ample
    divisor for $\widehat\cC_{n,k} \to \cC_{n,k}$.  Therefore $D$ is also $\pi$-ample,
    so $\Xh$ is projective.
\end{proof}
\subsubsection{The local ample/K\"ahler cone} \label{sec:local_cone}

I will now introduce some terminology which is useful for discussing
resolutions of isolated singularities, including hyperconifolds.
\begin{definition}
    Let $X_0$ be a variety, smooth except at one point, and let $\pi : \Xh \to X_0$
    be a (not necessarily algebraic) resolution map.  Call the exceptional set $E$, and
    denote by $E_1,\ldots,E_N$ its irreducible divisorial components.  Then a
    \emph{local ample divisor} for the resolution is a divisor $H_L = \sum_i t_i E_i$
    such that for any subvariety $Y$ of $E$, $(H_L)^{\dim Y} \cdot Y > 0$.
    The \emph{local ample cone} is the cone generated by numerical equivalence
    classes of local ample divisors; this may be empty.
\end{definition}
A local ample divisor for a resolution $\pi$ is therefore simply a $\pi$-ample
divisor that is supported only on the exceptional set; indeed, the definition comes
from the relative Nakai-Moishezon criterion.
\begin{remark}
    In the case where $\Xh$ is Calabi--Yau, ample ($\IR$)-divisor classes are
    equivalent to K\"ahler classes, and we can equivalently talk about the
    \emph{local K\"ahler cone}; this language is more natural for applications to
    string theory.  The word `local' has a double meaning: the local K\"ahler
    cone depends only on the fibre $E$ over the singular point of $X_0$, and it also
    describes the geometry of the K\"ahler cone in a neighbourhood of the face
    that corresponds to contracting $E$ to a point.
\end{remark}
\begin{lemma}
    Let $X_0$ be a factorial projective variety with a unique singular point, and
    $\pi : \Xh \to X_0$ an analytic resolution.  Then $\Xh$ is projective if and only if
    $\pi$ has a non-empty local ample cone.
\end{lemma}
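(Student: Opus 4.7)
My plan is to handle the two implications separately, using the relative Nakai--Moishezon criterion for algebraic spaces (already invoked in the proof of Theorem \ref{th:global_resolution}) as a translator between $\pi$-ampleness and the local-ample-cone condition. A useful preliminary observation is that, since $X_0$ has a unique singular point, every positive-dimensional fibre of $\pi$ lies inside the exceptional set $E$. Consequently a class supported on $E$ is a local ample divisor in the sense of the above definition if and only if it is $\pi$-ample in the Nakai--Moishezon sense. The theorem thus reduces to showing that $\pi$ admits a $\pi$-ample divisor supported on $E$ if and only if $\Xh$ is projective.

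The $(\Leftarrow)$ direction is then immediate: any local ample divisor makes $\pi$ into a projective morphism, and composing with the projective structure morphism $X_0 \to \Spec \IC$ yields projectivity of $\Xh$, exactly as in the proof of Theorem \ref{th:global_resolution}.

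For $(\Rightarrow)$, starting from an ample divisor $H$ on $\Xh$, my plan is to modify $H$ by subtracting a pullback from $X_0$ so as to kill its non-exceptional components without destroying positivity on fibres. Concretely, I would set $D := \pi_* H$; by factoriality of $X_0$ this Weil divisor is Cartier, so $\pi^* D$ makes sense as a Cartier divisor on $\Xh$. Writing $H_L := H - \pi^* D$, the standard identity $\pi_* \pi^* D = D$ for a birational morphism gives $\pi_* H_L = D - D = 0$, so $H_L$ has no non-exceptional divisorial components and is therefore supported on $E$. On any fibre $F$ of $\pi$ the restriction $\pi^* D|_F$ is numerically trivial, so $H_L|_F \equiv H|_F$ is ample; the relative Nakai--Moishezon criterion then upgrades this fibrewise positivity to $\pi$-ampleness of $H_L$, which is thus the desired local ample divisor.

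The principal obstacle, and the reason the hypothesis of factoriality appears, is precisely the step of forming $\pi^* D$: without factoriality (as in the non-factorial conifold case) $D = \pi_* H$ need not be Cartier, and the subtraction argument collapses. This fits the wider narrative of the paper---for non-factorial nodal threefolds, projectivity of a small resolution is a genuinely global question, whereas in the factorial hyperconifold setting it is detected entirely by the geometry of $E$.
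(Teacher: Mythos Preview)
Your proof is correct and follows the same overall strategy as the paper: decompose an ample divisor on $\Xh$ as a pullback from $X_0$ plus an exceptionally-supported piece, and show the latter is a local ample divisor. The only notable difference is in how factoriality is exploited in the $(\Rightarrow)$ direction. The paper invokes Lemma~\ref{lem:moving_support} to move the representative of $H_0$ on $X_0$ away from the singular point, so that $\pi^* H_0$ literally misses the exceptional set and hence contributes nothing to intersections there; you instead use the more direct fact that any Cartier pullback $\pi^* D$ is numerically trivial on fibres, which already suffices for the Nakai--Moishezon inequalities. Your route is slightly more elementary, as it sidesteps the moving lemma, and your explicit formula $H_L = H - \pi^*(\pi_* H)$ makes the decomposition more transparent than the paper's ``we can write $H = \pi^*(H_0) + H_L$''. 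Both arguments, however, rest on exactly the same idea: factoriality is what guarantees that the non-exceptional part of $H$ can be realised as the pullback of a \emph{Cartier} class, so that subtracting it does not disturb positivity on $E$.
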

\begin{proof}
     One direction is easy: we have already noted that if $H_L$ is a local ample
     divisor, it is a $\pi$-ample divisor.  Then if we choose some ample divisor
     $H_0$ on $X_0$, $m\,\pi^*H_0 + H_L$ will be ample on $\Xh$ for all $m\gg 0$
     \cite{Lazarsfeld}.
     
     Conversely, suppose $\Xh$ admits an ample divisor $H$.  Then we can write
     $H = \pi^*(H_0) + H_L$, where $H_0$ is some class on $X_0$, and $H_L$ is
     supported only on the exceptional set.  Since $X_0$ is factorial, $H_0$ has a
     representative not supported at the singular point (see Lemma
     \ref{lem:moving_support}); it follows that $\pi^*(H_0)$ is zero on the exceptional
     set, and therefore, since $H$ is ample, $H_L$ must be a local ample divisor by
     the Nakai-Moishezon criterion.
\end{proof}
\begin{remark}
    When the singularity on $X_0$ is toric, and the resolution is also toric, there
    is a simpler condition for a local ample divisor: in this case, $H_L$ is a local
    ample divisor if and only if $H_L \cdot C > 0$ for all toric curves $C$ in the
    exceptional set.  This follows immediately from the relative `toric Kleiman
    condition' \cite{CLS}.
\end{remark}

\newpage
\begin{example}[Resolving $\cC_{3,1}$] \label{ex:C31_resolutions}
    There are two distinct crepant resolutions of $\cC_{3,1}$, which is the
    unique $\IZ_3$-hyperconifold; their toric diagrams are shown in
    Figure~\ref{fig:C31_res}.  We will see here that only the first has a
    non-empty local ample/K\"ahler cone, so any factorial projective variety
    containing this singularity admits exactly one crepant, projective resolution.
    \begin{figure}[ht]
    \begin{center}
        \includegraphics[width=.2\textwidth]{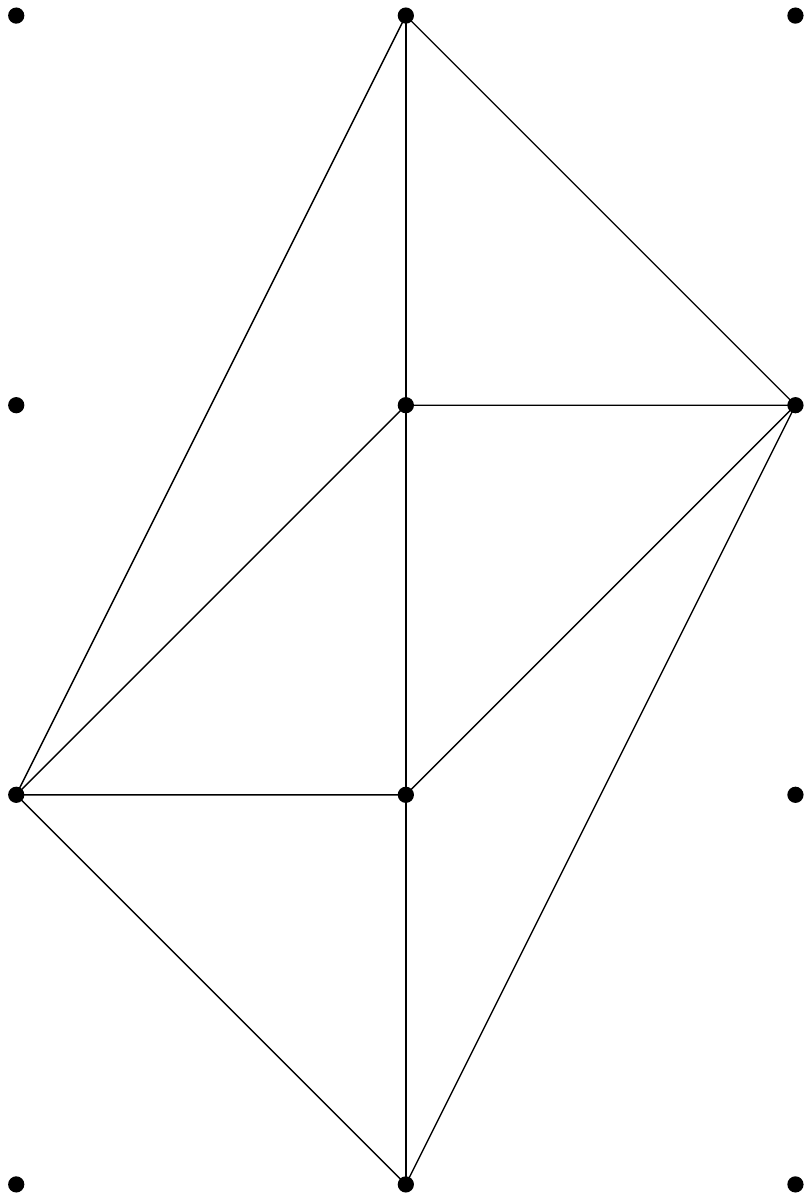}
        \hspace{5em}
        \includegraphics[width=.2\textwidth]{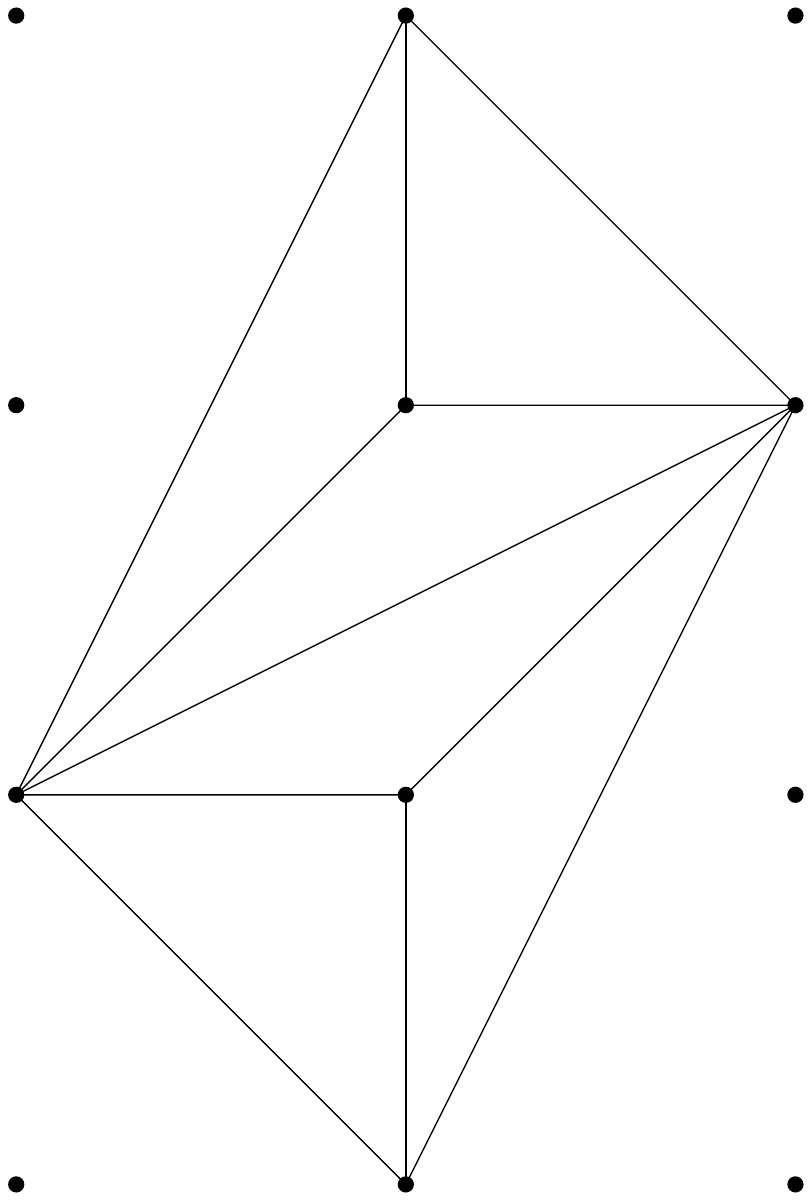}
        \caption{\small \label{fig:C31_res}        
        The two crepant resolutions of $\cC_{3,1}$.  The first has a non-empty local
        K\"ahler cone, while the second does not.
        }
    \end{center}
    \end{figure}

    \noindent\underline{Case 1}\\
    Note that the first resolution corresponds to star subdivisions with respect to first
    one, then the other, of the interior points in the toric diagram for $\cC_{3,1}$; we
    therefore expect a non-empty local ample cone.  The exceptional set consists
    of two copies of the Hirzebruch surface $F_1$, glued along a common copy of
    $\IP^1$ (this can be seen by utilising the `star construction' of toric geometry
    \cite{Fulton}).  Call the first surface $S_1$, and the second $S_2$, and define a
    putative local ample divisor $D_L = t_1 S_1 + t_2 S_2$.
    
    We can now evaluate the divisor $D_L$ against various curves in the resolution.
    To do so, note that by adjunction, and the fact that the ambient space
    $\widehat \cC_{3,1}$ is Calabi--Yau, we have $K_{S_i} \sim S_i\vert_{S_i}$, so
    that if a curve $C$ is contained in $S_i$, we get
    $S_i \cdot C = K_{S_i}\cdot C = -2 - C^2$, where the last equality follows from
    the fact that $S_i$ is a toric surface.
    
    Each surface $S_i$ contains toric curves of self-intersection $0$, which intersect
    the other surface transversely at a single point.  For such a curve $C_1 \subset S_1$,
    we get $D_L\cdot C_1 = -2 t_1 + t_2$, and similarly for $C_2 \subset S_2$, we get
    $D_L \cdot C_2 = t_1 - 2t_2$.  Therefore $-2t_1 + t_2 > 0$ and $t_1 - 2t_2 > 0$ are
    necessary conditions for a local ample divisor.  The remaining toric curves only yield
    weaker inequalities, so the local ample/K\"ahler cone is given by
    \begin{equation*}
        2 t_1 < t_2 < \frac 12 t_1 ~.
    \end{equation*}
    Note that this implies $t_1 < 0, t_2 < 0$.
    
    \newpage
    \noindent\underline{Case 2}\\
    The second triangulation has an exceptional set consisting of two disjoint
    surfaces $S_1, S_2$, each isomorphic to $\IP^2$, and a rational curve $C$
    which intersects each transversely at a single point.  From considering the
    toric curves embedded in either $S_1$ or $S_2$, we get the conditions
    $t_1 < 0, t_2 < 0$.  However, $D_L\cdot C = t_1 + t_2$, so in this case the
    local ample/K\"ahler cone is empty.
\end{example}
\subsection{Topological data}

Given a hyperconifold transition $X \stackrel{\IZ_n}{\rightsquigarrow} \Xh$, the
results of this section will allow us to calculate the topological data of $\Xh$ in
terms of that of $X$.

\subsubsection{Hodge numbers} \label{sec:hodge_nums}

Smooth Calabi--Yau threefolds have only two independent Hodge numbers
undetermined by the Calabi--Yau conditions, which we can take to be
$h^{1,1}$ and $h^{2,1}$.  These quantities behave simply under
hyperconifold transitions.
\begin{theorem} \label{th:hodgenos}
    If $X \stackrel{\IZ_n}{\rightsquigarrow} \Xh$ is a $\IZ_n$-hyperconifold transition, then the Hodge
    numbers of $X$ and $\Xh$ are related by
    \begin{align*}
        h^{1,1}(\Xh) &= h^{1,1}(X) + n-1 ~,\\
        h^{2,1}(\Xh) &= h^{2,1}(X) - 1 ~.
    \end{align*}
\end{theorem}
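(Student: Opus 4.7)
The plan is to compute $\chi(\hat X) - \chi(X)$ and the change in $h^{2,1}$ separately, then read off $h^{1,1}$ from the Calabi--Yau identity $\chi = 2(h^{1,1} - h^{2,1})$ for smooth threefolds.

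For the Euler characteristic, I would exploit the locality of the transition: away from the singular point, the spaces $X$, $X_0$ and $\hat X$ are naturally identified with the open set $X_0 \setminus \{\mathrm{pt}\}$, while they differ in a small neighbourhood of the singularity. Using Mayer--Vietoris with a contractible neighbourhood of the hyperconifold point (whose link is a quotient of $S^3\times S^2$, hence has vanishing Euler characteristic), this yields
\begin{equation*}
\chi(\hat X) - \chi(X) = \chi(E) - \chi(F),
\end{equation*}
where $F$ denotes the local smoothing (Milnor fibre) and $E \subset \hat X$ is the exceptional fibre. By Section~\ref{sec:topology}, $F$ deformation retracts onto the vanishing cycle $L(n,k)$, so $\chi(F) = 0$. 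The space $E$ is a projective toric variety inside $\widehat{\cC}_{n,k}$, so $\chi(E)$ counts its torus-fixed points. Inspection of the construction in the proof of Theorem~\ref{th:local_resolution} shows that the first star subdivision produces only three-dimensional cones having $v_1$ as a vertex, and each subsequent subdivision preserves the property that every $3$D cone contains at least one interior ray. Hence all $2n$ torus-fixed points of $\widehat{\cC}_{n,k}$ lie in $E$, giving $\chi(E) = 2n$ and $\chi(\hat X) - \chi(X) = 2n$.

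For $h^{2,1}$, the key local input is that the miniversal deformation space of $\cC_{n,k}$ is one-dimensional. The conifold $\cC$ has the one-parameter smoothing $p + t = 0$, and since $p$ is $\IZ_n$-invariant by Theorem~\ref{th:classification}, this smoothing descends to $\cC_{n,k}$; a $T^1$ computation (taking $\IZ_n$-invariants of the one-dimensional $T^1$ of the conifold) confirms that no further deformation directions appear. Globally, the equisingular deformations of $X_0$ therefore form a codimension-one subspace of the deformations of $X$, and correspond bijectively to the complex deformations of the crepant resolution $\hat X$. Hence $h^{2,1}(\hat X) = h^{2,1}(X) - 1$. Combining this with the Euler characteristic calculation,
\begin{equation*}
2[h^{1,1}(\hat X) - h^{2,1}(\hat X)] = \chi(\hat X) = \chi(X) + 2n = 2[h^{1,1}(X) - h^{2,1}(X)] + 2n,
\end{equation*}
and substituting $h^{2,1}(\hat X) = h^{2,1}(X) - 1$ yields the stated $h^{1,1}(\hat X) = h^{1,1}(X) + n - 1$.

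The main obstacle is the deformation-theoretic input for $h^{2,1}$. While the existence of the one-parameter smoothing is immediate from $\IZ_n$-invariance of $p$, rigorously identifying the deformations of $\hat X$ with the equisingular deformations of $X_0$, and showing that the codimension of the latter inside the deformations of $X$ is exactly one, requires either a careful $T^1$ computation at the hyperconifold point together with a global-to-local argument, or an appeal to general results on crepant resolutions of isolated Gorenstein threefold singularities in the spirit of Friedman's analysis of nodal Calabi--Yau threefolds.
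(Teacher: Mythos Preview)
Your Euler characteristic computation is essentially the same as the paper's, and is correct. The genuine difference is in which Hodge number you compute directly: you go for $h^{2,1}$ via deformation theory, whereas the paper computes $h^{1,1}$ directly and then reads off $h^{2,1}$ from the Euler number.

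The paper's argument for $h^{1,1}$ is considerably more elementary than your deformation-theoretic step, and it uses the factoriality hypothesis in the definition of a hyperconifold transition in an essential way. Since $X_0$ is factorial, its divisor class group is isomorphic to that of $X$ (no new Weil divisor classes appear at the singular point), and the crepant resolution introduces exactly $n-1$ new, linearly independent exceptional divisor classes, giving $h^{1,1}(\Xh) = h^{1,1}(X) + n - 1$ immediately. By contrast, your route requires (i) identifying $\mathrm{Def}(\Xh)$ with the equisingular stratum of $\mathrm{Def}(X_0)$, and (ii) showing this stratum has codimension exactly one inside $\mathrm{Def}(X)$---both of which you correctly flag as the ``main obstacle'', and which would need Friedman-type results to make rigorous. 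So your approach is not wrong, but it trades a one-line divisor count for a substantial deformation-theoretic argument; the paper's route is the cleaner one here, and in particular explains why the factoriality assumption is built into the definition.
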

\begin{proof}
    Due to the restricted Hodge diamond of a Calabi--Yau threefold, the topological
    Euler number is given simply by
    \begin{equation*}
        \ch(X) = 2\big(h^{1,1}(X) - h^{2,1}(X)\big) ~.
    \end{equation*}
    The change in the Euler number is easy to calculate via the surgery picture
    of a transition.  We delete a three-cycle homeomorphic to a lens space, which
    has $\chi = 0$, and replace it with one of the toric spaces described
    in Section~\ref{sec:transitions}.  The Euler number of a toric variety is equal to
    the number of top-dimensional cones in its fan; it follows that for any complete
    crepant resolution of a $\IZ_n$-hyperconifold, the Euler number increases by
    $2n$.  In terms of Hodge numbers,
    \begin{equation} \label{eq:Euler_change}
        %2\left(
        \big(h^{1,1}(\Xh) - h^{1,1}(X)\big) - \big(h^{2,1}(\Xh) - h^{2,1}(X)\big)%\right)
        = %2
        n ~.
    \end{equation}
    Secondly, for a smooth Calabi--Yau threefold, the Hodge number $h^{1,1}$ is
    equal to the rank of the divisor class group, which allows us to calculate the
    first term.  The intermediate singular variety $X_0$ is factorial by assumption,
    so its class group is isomorphic to that of $X$ (no new divisor classes are
    created on the singular variety).  The resolution introduces an exceptional
    divisor with $n-1$ components, all linearly independent.  Therefore
    \begin{equation*}
        h^{1,1}(\Xh) - h^{1,1}(X) = n-1 ~.
    \end{equation*}
    Combining this with \eqref{eq:Euler_change} gives the claimed relations.
\end{proof}
The decrease of $h^{2,1}$ by one can be understood as imposing one condition
on the moduli of $X$ in order to form the hyperconifold singularity.  We will see
this very explicitly in Example~\ref{ex:explicit_transition}.

\subsubsection{The fundamental group}

In the simplest case, a hyperconifold singularity develops when a generically-free
group action develops at least one fixed point, so the corresponding singular variety
will have a smaller fundamental group than the smooth members of the
family.  As we saw above, the resolution process involves gluing in a complete
(reducible) toric surface, and these are simply-connected, so a hyperconifold
transition will typically reduce the size of the fundamental group.

We will consider the situation where some general finite group $G$ acts freely
on a Calabi--Yau threefold $\Xt$, which can be deformed until a cyclic subgroup
develops a fixed point.  Let $g \in G$ be a generator of this subgroup, and suppose
$\xt \in \Xt$ is the fixed point.  Then for any $h \in G$, we have
$\big(hg^k h^{-1}\big)\big(h(\xt)\big) =h(\xt)$, so all elements conjugate to $g$, or any of
its powers, also necessarily develop fixed points, which fill out the $G$-orbit of $\xt$.
We might therefore expect the hyperconifold transition to `destroy' the smallest normal
subgroup containing $\spn{g}$---the normal closure $\spn{g}^G$.  This can be made
precise:
\begin{theorem} \label{th:pi1}
    Let $t$ be a parameter, and $X_t = \Xt_t/G$ be a family of Calabi--Yau threefolds,
    smooth for $t\neq 0$, where each $\Xt_t$ is simply-connected.  Suppose that
    $\Xt_0$ is smooth apart from a single $G$-orbit of nodes, and $X_t$ is smooth
    apart from a $\IZ_n$-hyperconifold singularity, which is the image of the node(s)
    under the quotient map.  If $\Xh$ is a Calabi--Yau resolution of $X_0$, then
    $\pi_1(\Xh) \cong G/N$, where $N$ is the normal closure of the set of elements
    of $G$ which have fixed points on $\Xt_0$.
\end{theorem}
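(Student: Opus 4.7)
The plan is to compute $\pi_1(\Xh)$ in three stages: first reduce to $\pi_1(X_0)$, then use the $G$-Galois cover to compute the $\pi_1$ of the smooth locus, and finally account for the link of the singular point.

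The first step is to show $\pi_1(\Xh) \cong \pi_1(X_0)$. Writing $\Xh = (X_0\setminus\{p_0\}) \cup \widehat U$, where $p_0$ is the hyperconifold point and $\widehat U$ is a tubular neighbourhood of the exceptional set $E$ glued to the smooth locus along the link $L$ of $p_0$, the construction in \sref{sec:transitions} identifies $E$ as a complete (reducible) toric surface, hence simply-connected. Van Kampen then gives
\[
\pi_1(\Xh) \;=\; \pi_1(X_0 \setminus \{p_0\}) \big/ \bigl\langle \mathrm{im}\,\pi_1(L)\bigr\rangle .
\]
The parallel decomposition of $X_0$ (replacing $\widehat U$ by a cone on $L$) yields exactly the same quotient, so the two fundamental groups agree.

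The second step is to compute $\pi_1(X_0 \setminus \{p_0\}) \cong G$. The quotient map restricts to an unramified $G$-Galois cover $\Xt_0^{\mathrm{sm}} \to X_0\setminus\{p_0\}$, where $\Xt_0^{\mathrm{sm}}$ is the complement in $\Xt_0$ of the $G$-orbit of nodes. This yields the exact sequence
\[
1 \to \pi_1(\Xt_0^{\mathrm{sm}}) \to \pi_1(X_0 \setminus \{p_0\}) \to G \to 1 ,
\]
and two further van Kampen arguments give $\pi_1(\Xt_0^{\mathrm{sm}}) = 1$: each node has a conical neighbourhood with simply-connected link $S^3 \times S^2$, so removing the nodes does not change $\pi_1$; and smoothing each node replaces it by the simply-connected vanishing cycle $S^3$, giving $\pi_1(\Xt_0) = \pi_1(\Xt_t) = 1$.

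Finally, I would identify the map $\pi_1(L) \to G$ and conclude. Since the link of $p_0$ in $\cC_{n,k}$ is $(S^3 \times S^2)/H$ with $H \cong \IZ_n$ the stabiliser of a chosen node $\xt_0$ acting freely on $S^3 \times S^2$, we have $\pi_1(L) \cong H$. The preimage of $L$ under the Galois cover is the disjoint union of the $|G|/n$ links at the nodes, permuted transitively by $G$ with stabiliser $H$; this identifies the composition $\pi_1(L) \to \pi_1(X_0\setminus\{p_0\}) \cong G$ with the subgroup inclusion $H \hookrightarrow G$. Hence $\pi_1(\Xh) = G/N_0$ with $N_0$ the normal closure of $H$. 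The set of $g \in G$ having a fixed point on $\Xt_0$ is exactly $\bigcup_{h\in G} hHh^{-1}$ (the stabiliser of $h\xt_0$ being $hHh^{-1}$), whose normal closure coincides with $N_0$, giving $N_0 = N$. I expect the third step to be the subtlest: one must carefully track the covering-space picture near the branch locus to verify that the induced map on $\pi_1$ really is (conjugate to) the subgroup inclusion, rather than some other homomorphism $H \to G$; the preceding reductions are routine van Kampen bookkeeping once the right decompositions are in place.
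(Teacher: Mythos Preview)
Your proposal is correct and follows essentially the same Seifert--van Kampen argument as the paper: decompose $\Xh$ into the smooth locus $Y = X_0\setminus\{p_0\}$ (with $\pi_1(Y)\cong G$) and a simply-connected neighbourhood of the exceptional set, glued along the link $L$ with $\pi_1(L)\cong \IZ_n$. You are in fact more careful than the paper on the two points it glosses over---namely, justifying $\pi_1(\Xt_0^{\mathrm{sm}})=1$ and identifying the map $\pi_1(L)\to G$ with the stabiliser inclusion $H\hookrightarrow G$---and your initial detour through $\pi_1(X_0)$ is harmless since it reduces to the same quotient formula.
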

\begin{proof}
    Let $\xt \in \Xt_0$ be one of the nodes.  Then $\Yt = \Xt_0\setminus G\xt$ is
    smooth by assumption, and $G$ acts freely on it, yielding a smooth quotient
    space $Y = \Yt/G$ with $\pi_1(Y) \cong G$.  Note that, since all the nodes on $\Xt$
    are identified by $G$, the space $Y$ is obtained from $X_0$ by
    removing the unique singular point.
    
    Topologically, the resolution corresponds to gluing in a contractible neighbourhood
    $\Th$ of the exceptional set, so that $\Xh = Y \cup \Th$.  The result is a simple
    application of the Seifert-van Kampen theorem \cite{Hatcher} to this decomposition,
    as I will now outline.  Some neighbourhood of a $\IZ_n$-hyperconifold singularity is
    homeomorphic to a cone over $S^3\times S^2/\IZ_n$, which after deleting the tip
    is homotopy equivalent to $S^3\times S^2/\IZ_n$; therefore
    $Y \cap\Th \simeq S^3\times S^2/\IZ_n$.  Finally, note that $\Th$ is
    at least homotopy-equivalent to a toric variety, corresponding to the resolutions
    constructed previously.  Since the fans for these all contain at least one
    full-dimensional cone, they are all simply-connected \cite{Fulton}.  So the data we
    have is
    \begin{equation*}
        \hat X = Y\cup\Th ~,~ \pi_1(Y) \cong G ~,~ \pi_1(\Th) \cong \mb{1} ~,~ \pi_1(Y\cap \Th) \cong \IZ_n~.
    \end{equation*}
    Since $Y$, $\Th$ and $Y\cap \Th$ are all path-connected, the result follows
    immediately from the Seifert-van Kampen theorem.
\end{proof}
There are examples of hyperconifold transitions which do not arise in exactly
the way described in the Lemma.  For instance, $X$ might already be
simply-connected, yet undergo a hyperconifold transition to another
simply-connected space; see \cite{Davies:2011is} for some examples.
The fundamental group can nevertheless always be computed using the simple
technique of the proof above.

\subsubsection{Intersection numbers} \label{sec:intersection_numbers}

Finally, we can calculate the triple-intersection numbers of $\Xh$ in terms of
those of $X$.  To do so, we will choose a basis consisting of divisors pulled
back from $X_0$, and components of the exceptional divisor.  The calculation
of intersection numbers is made simple by the following lemma, which shows
that we can choose a basis for the class group of $X_0$ which consists of
divisors which do not intersect the singularity.
\begin{lemma}[See, e.g., \cite{Shafarevich}] \label{lem:moving_support}
    Let $X_0$ be a factorial quasi-projective variety, and $\{x_1,\ldots,x_m\}$ be
    some finite collection of points in $X_0$.  Then, given any divisor $D$ on
    $X_0$, there exists a divisor $D' \sim D$ which is not supported at any of
    the $x_i$.
\end{lemma}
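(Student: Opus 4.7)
The plan is to exploit factoriality to produce a rational function $f$ that is a local equation for $D$ simultaneously at all the $x_i$, and then take $D' := D - \mathrm{div}(f)$. First, since $X_0$ is quasi-projective, I would choose an affine open subset $U = \Spec R$ of $X_0$ containing the finite set $\{x_1,\ldots,x_m\}$, which is possible because any finite collection of points in a quasi-projective variety lies in a common affine open. Let $\mathfrak{m}_i \subset R$ denote the maximal ideal of $x_i$; these are distinct, hence pairwise comaximal.

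By factoriality, each local ring $\cO_{X_0,x_i} = R_{\mathfrak{m}_i}$ is a UFD, so the Weil divisor $D$ becomes principal after localization, with some generator $f_i \in K^\times$ where $K = \mathrm{Frac}(R)$. The substantive step is then to produce a single $f \in K^\times$ such that $f/f_i$ is a unit in $R_{\mathfrak{m}_i}$ for every $i$ --- equivalently, such that $\mathrm{div}(f) = D$ on a neighbourhood of each $x_i$. One clean route is to pass to the semilocalization $A := R_S$ with $S = R \setminus \bigcup_i \mathfrak{m}_i$: this is a semilocal Noetherian normal domain whose localizations at the maximal ideals $\mathfrak{m}_i A$ are precisely the UFDs $R_{\mathfrak{m}_i}$, and a semilocal normal domain with UFD localizations is itself factorial, so $D|_{\Spec A}$ is principal and one takes its generator as $f$. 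Alternatively, $f$ may be constructed directly by the Chinese Remainder Theorem: since the $\mathfrak{m}_i$ are pairwise comaximal, one solves the simultaneous approximation problem of matching each $f_i$ up to a unit at $x_i$ modulo a sufficiently high power of $\mathfrak{m}_i$.

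With $f$ in hand, the divisor $D' := D - \mathrm{div}(f)$ is linearly equivalent to $D$, and its support is disjoint from every $x_i$, since $D'$ restricts to zero on each $\Spec R_{\mathfrak{m}_i}$ by construction. The main obstacle is the semilocal gluing step: one must combine the individual local equations $f_i$ into a single rational function that generates the ideal of $D$ at every $x_i$ simultaneously, without inadvertently introducing new components of $\mathrm{div}(f)$ through any $x_j$. Factoriality is what makes this work --- on a non-factorial variety, $D$ need not be Cartier at the $x_i$, so no local generator $f_i$ exists and the whole strategy collapses.
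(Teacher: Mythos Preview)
Your argument is correct. The paper does not actually prove this lemma; it is simply stated with a citation to Shafarevich's textbook, so there is no ``paper's own proof'' to compare against. Your write-up supplies a valid self-contained argument.

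The key step---passing to the semilocalisation $A = R_S$ and observing that it is a UFD---is sound: $A$ is a Noetherian normal domain (localisation preserves both properties), its localisations at the finitely many maximal ideals $\mathfrak{m}_i A$ coincide with the $R_{\mathfrak{m}_i}$, which are UFDs by factoriality of $X_0$, so every height-one prime of $A$ is locally principal and hence Cartier; since $\mathrm{Pic}$ of any semilocal ring vanishes, $\Cl(A)=0$ and $A$ is a UFD. This neatly sidesteps the worry you flag at the end, namely that naively patching the individual $f_i$ via CRT could introduce spurious zeros or poles through some other $x_j$: taking a single generator $f$ of $D|_{\Spec A}$ guarantees simultaneously that $\mathrm{div}(f)$ agrees with $D$ in every $\Spec R_{\mathfrak{m}_i}$, so $D' = D - \mathrm{div}(f)$ genuinely avoids all the $x_i$.

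One small point of presentation: you might note explicitly that although $\Spec A$ is not an open subscheme of $X_0$, the conclusion ``$x_i \notin \mathrm{Supp}(D')$'' only requires that $D'$ pull back to zero in each $\Spec \cO_{X_0,x_i}$, which is exactly what the choice of $f$ achieves.
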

%
%%
%\begin{obsoleteproof}
%    Choose a sufficiently ample divisor $A$, not supported at $x$, such that
%    $\cO_{X_0}(A-D)$ is globally generated.  Since $X_0$ is factorial, $A-D$ is
%    Cartier, so $\exists$ a single global section $s \in \G(X_0, \cO_{X_0}(A-D))$
%    which generates the stalk $\cO_{X_0,x}(A-D)$.  Via the inclusion
%    $\G(X_0, \cO_{X_0}(A-D)) \subseteq \G(X_0, \cO_{X_0}(A))$, $s$ determines
%    an effective divisor $S \sim A$.  Furthermore, since $s$ generates
%    $\cO_{X_0,x}(A-D)$, and $A$ is not supported at $x$, there is some
%    neighbourhood of $x$ on which $S = D$.
%    
%    Define $D' = D-S+A$.  By construction, $D'$ is not supported at $x$, and
%    $D' \sim D$.
%\end{proof}
%%

We will now define a convenient basis of divisors in terms of which to compute
the intersection numbers.  Let $I,J,\ldots$ be indices running from $1$ to
$h^{1,1}(\Xh)$, which we know from Section~\sref{sec:hodge_nums} is equal to
$h^{1,1}(X)+n-1$.  We will split the basis into two parts; to this end, let the
lower case latin indices $i,j,\ldots$ run from $1$ to $h^{1,1}(X)$,
and greek indices $\a,\b,\ldots$ run from $h^{1,1}(X)+1$ to $h^{1,1}(X)+n-1$.

Take a basis $\{D_i\}$ for the divisor class group of $X_0$, and let
$\{\hat D_i\}$ be their pullbacks to $\Xh$ under the resolution map.  Then
arbitrarily order the components of the exceptional divisor, and denote these
by $\{\hat D_\a\}$.  The total collection $\{\hat D_I\}$ defines a basis on $\Xh$.

It is now straightforward to calculate all the intersection numbers of $\Xh$:
\begin{theorem}
    Let $X \stackrel{\IZ_n}{\rightsquigarrow} \Xh$ be a hyperconifold transition,
    with intermediate singular variety $X_0$.  Then, in the basis for $H^{1,1}(X)$
    defined above, the triple-intersection numbers $\hat d_{IJK}$ are
    \begin{enumerate}
        \item
            $\hat d_{ijk} = d_{ijk}$, where $d_{ijk}$ are the triple-intersection numbers
            on $X$ (and $X_0$).
        \item
            $\hat d_{ij\a} = \hat d_{i\a\b} = 0$
        \item
            $\hat d_{\a\b\g}$ are the intersection numbers of the components of the
            exceptional divisor of $\widehat\cC_{n,k}$, and can be calculated using standard
            toric techniques.
    \end{enumerate}
\end{theorem}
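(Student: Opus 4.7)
The plan is to establish the three items in turn, with Lemma~\ref{lem:moving_support} doing most of the work.  The key observation is that, because $X_0$ is factorial, we may choose each basis divisor $D_i$ on $X_0$ to have a representative whose support avoids the (single) singular point $x_0$.  Pulling back by $\pi$, we then have representatives $\hat D_i$ whose supports lie in $\hat X \setminus E$, the locus on which $\pi$ is an isomorphism.  In contrast, each $\hat D_\a$ is supported entirely in $E$.

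For item (2), this is now immediate: whenever at least one factor is a $\hat D_\a$ and at least one factor is a $\hat D_i$, the representatives have disjoint supports (one in $E$, the other in $\hat X\setminus E$), so the triple-intersection class is represented by an empty cycle and hence vanishes.  This handles both $\hat d_{ij\a}$ and $\hat d_{i\a\b}$.

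For item (1), with all three factors chosen as above, $\hat D_i \cap \hat D_j \cap \hat D_k$ lies entirely in $\hat X \setminus E \cong X_0 \setminus \{x_0\}$, so the intersection number on $\hat X$ equals the intersection number on $X_0$ of the pushforwards $D_i, D_j, D_k$.  It remains to identify this with $d_{ijk}$ on the smooth deformation $X$.  For this I would use deformation invariance: the family $X_t$ is flat, the classes $D_i$ extend across the family (they come from classes on the smooth $X_t$ by assumption, via the restriction/specialisation map, which is an isomorphism on $H^{1,1}$ by the assumption that $X_0$ is factorial and has the same $h^{1,1}$ as $X$), and the triple-intersection form is constant in flat families of projective varieties.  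Thus $\hat d_{ijk}=d_{ijk}$.

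For item (3), the triple intersections $\hat d_{\a\b\g}$ are supported on $E$, and computing them only requires an analytic (or even formal) neighbourhood of $E$ in $\hat X$.  By construction of the global resolution in Theorem~\ref{th:global_resolution}, such a neighbourhood is isomorphic to a neighbourhood of the exceptional set in the toric resolution $\widehat{\cC}_{n,k}$.  Hence the $\hat d_{\a\b\g}$ coincide with the corresponding toric intersection numbers of the exceptional components on $\widehat{\cC}_{n,k}$, which are computed by the usual combinatorial rules (e.g., via adjunction on each toric surface component, as illustrated in Example~\ref{ex:C31_resolutions}).

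The main (and really only) subtle step is the deformation-invariance argument in item (1); once one accepts that the divisor classes on $X$ specialise to classes on $X_0$ with the same triple products, the rest reduces to a clean application of Lemma~\ref{lem:moving_support} and the local toric description of the resolution.
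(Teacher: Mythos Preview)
Your proposal is correct and follows essentially the same approach as the paper: both proofs use Lemma~\ref{lem:moving_support} to represent the $D_i$ away from the singular point, deduce item~(2) from disjointness of supports, and deduce item~(3) from the local identification of a neighbourhood of $E$ with $\widehat\cC_{n,k}$. The only cosmetic difference is in item~(1): where you invoke deformation invariance of triple products in the flat family $X_t$, the paper phrases the same step topologically, observing that $X_0\setminus\{x_0\}$ deforms to $X$ minus a lens space and that intersection numbers are topological on smooth spaces.
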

\begin{proof}
    By Lemma \ref{lem:moving_support}, the divisors $D_i$ can be chosen to miss
    the singularity of $X_0$.  Since $\Xh \cong X_0$ away from the exceptional
    divisor/singular point, the intersection numbers of the $\hat D_i$ are obviously
    the same as those of the $D_i$.  To see that these are the same as the
    intersection numbers on $X$ (the smoothing of $X_0$), note that $X_0$ with the
    singularity deleted deforms to $X$ with a lens space deleted, and the divisors
    deform with the whole space.  Intersection numbers are topological on smooth
    spaces, so $\hat d_{ijk} = d_{ijk}$.
    
    If we choose the $D_i$ to miss the singularity as above, then it is clear that
    $\hat D_i \cap \hat D_\a = \emptysetnew ~\forall~ i,\a$, which immediately implies
    $\hat d_{ij\a} = \hat d_{i\a\b} = 0$.
    
    The final statement is obvious, as intersection numbers are topological, and
    the resolved toric space $\widehat\cC_{n,k}$ is homeomorphic to a neighbourhood of the
    exceptional divisor on $\Xh$, of which the $\hat D_\a$ are components.
\end{proof}
%

%%%
\section{Examples and applications} \label{sec:examples}

Hyperconifold transitions provide a method to construct new Calabi--Yau
threefolds from existing ones.  In fact, we already have one example for
free: we saw in Section~\ref{sec:intro} that the $\IZ_5$ quotient of the
quintic can develop a $\IZ_5$-hyperconifold singularity, and the results of
Section~\ref{sec:transitions} imply that this has a Calabi--Yau resolution $\Xh$,
and allow us to calculate its topological data.  The $\IZ_5$ quotient of the
quintic has Hodge numbers $\hodgenos = (1,21)$, and fundamental group
$\IZ_5$, which is destroyed by the transition.  Therefore
$\hodgenos(\Xh) = (5,20)$ and $\pi_1(\Xh) \cong \id$.  This example was
already given in \cite{Davies:2011is}, and is implicitly included in earlier
work showing that geometric transitions connect all Calabi--Yau threefolds
which are hypersurfaces in toric fourfolds \cite{Kreuzer:2000xy}.

In this section, I describe some examples of hyperconifold transitions,
chosen to\linebreak complement those previously given in \cite{Davies:2011is}.  In
that paper, examples were given where the ambient space has multiple
fixed points, allowing for `chains' of hyperconifold transitions linking
multiple manifolds, and a hyperconifold transition was also used to
construct the first Calabi--Yau threefold with fundamental group $S_3$
(the permutation group on three\linebreak letters).  I will also present two examples
of hyperconifold singularities embedded in compact Calabi--Yau
threefolds which are not factorial; in these cases, the singular varieties
do not occur as the intermediate point of a hyperconifold \emph{transition},
at least as defined in this paper.

\begin{example}[$X^{1,3} \stackrel{\IZ_{10}}{\rightsquigarrow}X^{10,2}$] \label{ex:explicit_transition}
    Our first example will be very typical, and I will describe each step of the
    analysis in detail; this can then be used as a blueprint for any attempt to
    construct new Calabi--Yau manifolds via hyperconifold transitions.
    
    We begin with a manifold $X$ with fundamental group $\IZ_{10}\times\IZ_2$,
    first constructed in \cite{Candelas:2008wb}.  The covering space, $\Xt$, is
    embeddable in a product of five copies of $\IP^1$ as the complete intersection
    of two `quintilinear hypersurfaces'; its configuration matrix \cite{Candelas:1987kf}
    is
    \begin{equation*}
        \cicy{\IP^1\\ \IP^1\\ \IP^1\\ \IP^1\\ \IP^1}{1 & 1 \\1 & 1 \\1 & 1 \\1 & 1 \\1 & 1} ~.
    \end{equation*}
    Denote the two quintilinear polynomials by $p_1, p_2$.  If we take homogeneous
    coordinates $t_{i,a}$ on the ambient space, where $i \in \IZ_5$ and $a \in \IZ_2$,
    then we can define an action of the group $\IZ_{10}\times\IZ_2$ on the ambient
    space by allowing the generators of the two factor groups to act as\footnote{This
    action does not correspond to a linear representation of the group, but rather to a
    projective representation; the generators commute only up to rescaling of the
    homogeneous coordinates.}
    \begin{align} \label{eq:Z10Z2action}
        g_{10} &: t_{i,a} \mapsto t_{i+1,a+1} ~;\\ \notag
        g_2 &: t_{i,a} \mapsto (-1)^a t_{i,a} ~.
    \end{align}
    If we demand that the polynomials transform equivariantly under this action,
    then the group will act on the corresponding manifolds $\Xt$; an appropriate
    choice of transformation rule is
    \begin{align*}
        g_{10} &: p_1 \mapsto p_2 ~,~ p_2 \mapsto p_1 ~;\\
        g_2 &: p_1 \mapsto p_1 ~,~ p_2 \mapsto -p_2 ~.
    \end{align*}
    To write down the most general polynomials satisfying these properties, it is
    convenient to first define the following quantities, which are invariant under
    the $\IZ_5$ subgroup generated by $g_{10}^2$:
    \begin{equation*}
        m_{abcde} := \sum_i t_{i,a}t_{i+1,b}t_{i+2,c}t_{i+3,d}t_{i+4,e}~.
    \end{equation*}
    We now take the polynomials $p_1,p_2$ to be
    \begin{align*}
        p_1 &= \frac{A_0}{5} m_{00000} + A_1 m_{00011} + A_2 m_{00101} + A_3 m_{01111} ~,\\[2ex]
        p_2 &= \frac{A_0}{5} m_{11111} + A_1 m_{11100} + A_2 m_{11010} + A_3 m_{10000} ~,
    \end{align*}
    where the coefficients are arbitrary complex numbers.  It can be checked that
    even these very restricted polynomials cut out a smooth manifold for generic
    choices of the coefficients; I will defer to \cite{Candelas:2008wb} for the
    details of this step.
    
    To establish the existence of a smooth quotient, we must also check that the
    group action on $\Xt$ is fixed-point-free.\footnote{If $X = \Xt/G$ is smooth, and
    $\Xt$ is a Calabi--Yau threefold, then $X$ is automatically another Calabi--Yau
    manifold.  See \cite{Davies:2011fr} for example.} To do so, it is not necessary
    to check separately that \emph{every} group element acts freely, because if
    $g \in G$ has a fixed point, then so do all its powers.  A general element of
    $\IZ_{10}\times\IZ_2$ can be written as $g_{10}^k g_2^a$, with
    $0\leq k \leq 9$ and $0\leq a \leq 1$.  Depending on the values of $k$ and $a$,
    some power of this will be $g_{10}^2, g_{10}^5, g_{10}^5 g_2$, or $g_2$,
    so we need only check that these elements act freely.  The fixed points of these
    elements are listed in the following table:
    \begin{center}
    \begin{tabular}{|c|c|}
        \hline
        Element & Fixed points \\
        \hline\str
        $g_{10}^2$ & $[t_{i,0} : t_{i,1}] = [t_{0,0} : t_{0,1}] ~\forall~i$ \\
        \hline\str
        $g_{10}^5$ & $t_{i,a} = \pm t_{i,a+1}$ (sign can depend arbitrarily on $i$) \\
        \hline\str
        $g_{10}^5 g_2$ & $t_{i,a} = \pm \ii t_{i,a+1}$ (sign can depend arbitrarily on $i$) \\
        \hline\str
        $g_2$ & For each $i$, either $t_{i,0} = 0$ or $t_{i,1} = 0$ \\
        \hline
    \end{tabular}
    \end{center}
    In the case of $g_{10}^2$, the fixed points form an embedded copy of $\IP^1$,
    and the two polynomials become quintics in this $\IP^1$, which do not have
    simultaneous solutions for arbitrary coefficients.  The fixed points of the other
    elements are all isolated, and at each one, the polynomials $p_1, p_2$
    evaluate to non-trivial linear combinations of the coefficients.  We conclude
    that for generic choices of the coefficients, the group action is free.  We
    therefore obtain a smooth Calabi--Yau quotient, $X^{1,3}$, with fundamental
    group $\IZ_{10}\times\IZ_2$.  The three complex structure moduli correspond
    to the three degrees of freedom in the coefficients of $p_1,p_2$, up to scale.
    
    We seek a $\IZ_{10}$-hyperconifold transition, so wish to specialise the choice
    of polynomials so that $\Xt$ contains a fixed point of the $\IZ_{10}$ subgroup.
    Referring to the table above, and noting that some point $\xt \in \Xt$ is fixed by
    $g_{10}$ if and only if it is fixed by $g_{10}^2$ and $g_{10}^5$, we see that
    there are exactly two such points, given by
    $[t_{i,0} : t_{i,1}] = [1 : \pm 1]$ for all $i$ (with the same sign chosen for each
    value of $i$).  These are exchanged by $g_2$, so the symmetric $\Xt$ will
    contain one if and only if it contains the other.  At these points, we have
    $p_1 = \pm p_2 = A_0 + 5A_1 + 5A_2 + 5A_3$.  To be concrete, let us choose
    the following values of the coefficients, which ensure that this quantity vanishes
    (and also that the polynomials do \emph{not} vanish at the other fixed points in
    the ambient space):
    $(A_0, A_1, A_2, A_3) = (-30, 1, 2, 3)$.
    
    With the aid of a computer algebra package,\footnote{I used Mathematica 8
    \cite{Mathematica} for these calculations, as well as to produce the diagrams in
    this paper.} it can be checked that for the above choice of coefficients, $\Xt$ is
    singular at precisely the two fixed points of $g_{10}$.  Our next task is to
    investigate the nature of these singularities.  To this end, we choose local
    coordinates centred on one of the fixed points (the other is identical,
    thanks to the $g_2$ symmetry exchanging them): set $t_{i,0} = 1$ for all $i$,
    and $t_{i,1} = 1 + \e_i$.  We then expand the polynomials $p_1, p_2$ to
    quadratic order in $\vec\e$:
    \begin{align*}
        p_1 =& 18 \e_0 + 18 \e_1 + 18 \e_2 + 18 \e_3 + 18 \e_4 + 10 \e_0 \e_1 + 11 \e_0 \e_2\\
        &+ 11 \e_0 \e_3 + 10 \e_0 \e_4 + 10 \e_1 \e_2
            + 11 \e_1 \e_3 + 11 \e_1 \e_4 + 10 \e_2 \e_3 + 11 \e_2 \e_4 + 10 \e_3 \e_4 + \cO(\e^3) \\
        p_2 =& -18 \e_0 - 18 \e_1 - 18 \e_2 - 18 \e_3 - 18 \e_4 - 26 \e_0 \e_1 - 25 \e_0 \e_2 \\
        &- 25 \e_0 \e_3 - 26 \e_0 \e_4 - 26 \e_1 \e_2 - 25 \e_1 \e_3
             - 25 \e_1 \e_4- 26 \e_2 \e_3 - 25 \e_2 \e_4 - 26 \e_3 \e_4 + \cO(\e^3)
    \end{align*}
    Note that $p_1 + p_2$ has no linear term, and is invariant under the action of
    $g_{10}$.  To proceed, we can solve (say) $p_2 = 0$ for $\e_0$ as a power
    series in the other variables, and substitute this into $p_1+p_2$ up to order
    $\e^2$.  I will spare the reader the ugly details, but one can check that the
    resulting leading-order terms constitute a non-degenerate quadratic form,
    invariant under the $g_{10}$ action.  So our singularity is analytically
    isomorphic to the conifold, and the quotient space has a
    $\IZ_{10}$-hyperconifold singularity.
    
    To determine the $\IZ_{10}$ action on the singularity, we linearise the action
    on the local coordinates.  In terms of the $\e_i$, \eqref{eq:Z10Z2action}
    becomes
    \begin{equation*}
        g_{10} = [1 : 1 + \e_i] \mapsto [1+\e_{i+1} : 1] \sim \left[1 : \frac{1}{1+\e_{i+1}}\right]
            = [1 : 1 - \e_{i+1} + \cO(\e^2)] ~.
    \end{equation*}
    In other words $\e_i \mapsto -\e_{i+1}$ for $i=1,2,3$.  For $i=4$, we have
    $\e_4 \mapsto -\e_0$, but $\e_0$ is not amongst our local coordinates; we
    have solved $p_2 = 0$ to obtain $\e_0 = -\e_1 - \e_2 - \e_3 - \e_4 + \cO(\e^2)$,
    so the $g_{10}$ action effects $\e_4 \mapsto \e_1 + \e_2 + \e_3 + \e_4$.
    The linearised action of $g_{10}$ is therefore given by the matrix
    \begin{equation*}
        \left(\begin{array}{rrrr}
            0 & -1 & 0 & 0 \\
            0 & 0 & -1 & 0 \\
            0 & 0 & 0 & -1 \\
            1 & 1 & 1 & 1
        \end{array}\right) ~.
    \end{equation*}
    The eigenvalues of this matrix are $\z, \z^3, \z^7,\z^9$, where
    $\z = e^{2\pi\ii/10}$, so the singularity on the quotient space is described by
    the hyperconifold $\cC_{10,3}$.
    
    We conclude that, over the locus $A_0 + 5A_1 + 5A_2 + 5A_3 = 0$, the
    space $X^{1,3}$ develops a hyperconifold singularity, modelled on
    $\cC_{10,3}$.  By \ref{th:factoriality}, it is factorial, and by
    \ref{th:global_resolution} we can therefore obtain a new Calabi--Yau manifold
    by deleting the singular point, and gluing in any geometry described by a star
    subdivision\footnote{In fact, any crepant resolution with a non-empty local
    ample/K\"ahler cone will suffice.  The utility of star subdivisions is that they
    automatically give rise to local ample divisors.} of the fan for $\cC_{10,3}$.  One
    such subdivision is shown in Figure~\ref{fig:C103_resolution}.
    
    We began with a manifold which has fundamental group $\IZ_{10}\times\IZ_2$ and
    Hodge numbers $\hodgenos = (1,3)$, and constructed a
    $\IZ_{10}$-hyperconifold transition by allowing the $\IZ_{10}$ factor to develop
    a fixed point on the covering space.  According to the formulae of
    Theorem \ref{th:hodgenos} and Theorem \ref{th:pi1}, the new manifold has
    Hodge numbers $\hodgenos = (10,2)$, and fundamental
    group $\IZ_2$.
\begin{figure}
\begin{center}
    \includegraphics[width=.3\textwidth]{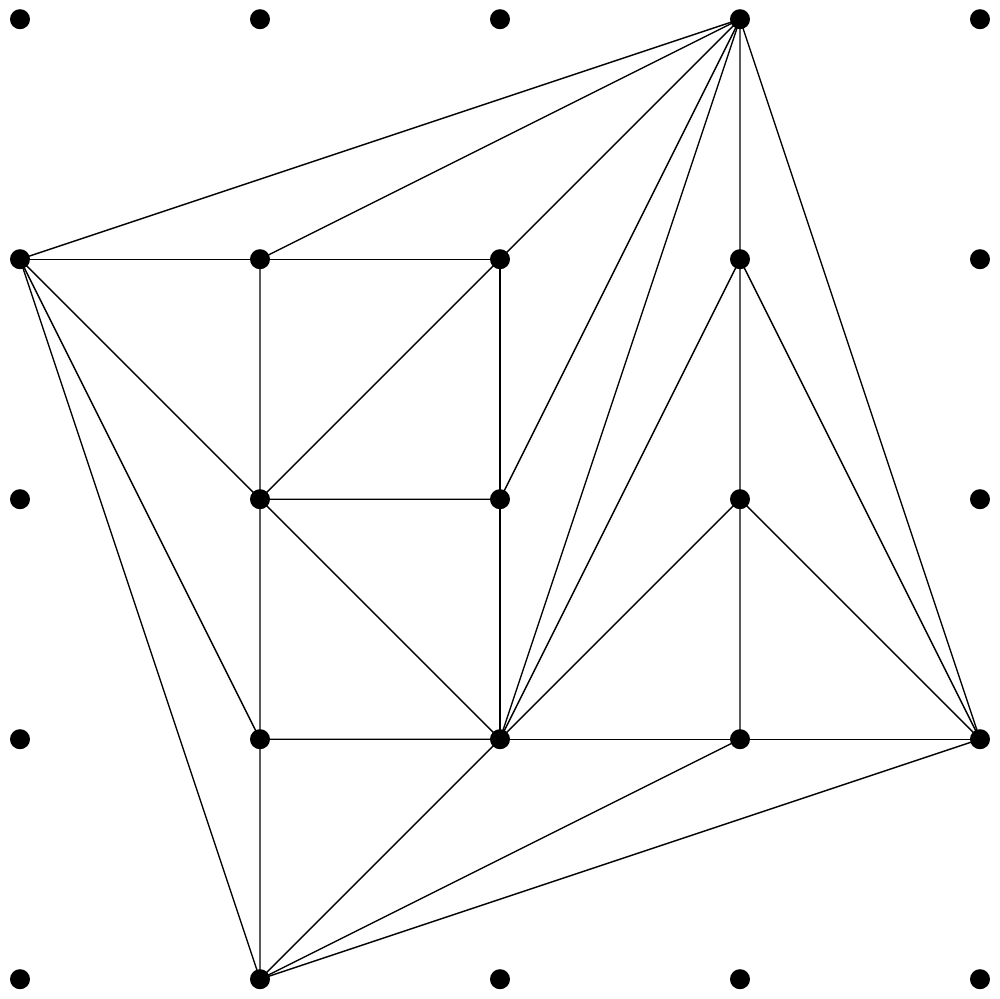}
    \caption{\small \label{fig:C103_resolution}
    One projective crepant resolution of $\cC_{10,3}$, given by a
    sequence of star subdivisions.  A different example, in which the first subdivision
    is with respect to the central lattice point, is given in \cite{Davies:2011is}.
    }
\end{center}
\end{figure}
\end{example}
\begin{example}[A non-normal subgroup]
    In \cite{Braun:2010vc}, Braun found families of smooth Calabi--Yau threefolds
    on which the non-Abelian group $\Dic_3 \cong \IZ_3\rtimes\IZ_4$ acts freely.  There
    is only one non-trivial semi-direct product of this form; the group presentation is
    \begin{equation*}
        \Dic_3 \cong \langle g_3, g_4 ~|~ g_3^3 = g_4^4 = e ~,~ g_4^{-1}g_3 g_4 = g_3^2 \rangle,
    \end{equation*}
    where $e$ is the identity.  It is easy to see that the $\IZ_2$ subgroup generated
    by $g_4^2$ is normal,\footnote{Note that $\IZ_2$ does \emph{not} have a
    complement in $\Dic_3$, so Theorem \ref{th:factoriality} does not apply.
    Nevertheless, blowing up the singular point of a $\IZ_2$-hyperconifold always
    gives a projective crepant resolution.} and that
    $\Dic_3/\IZ_2 \cong S_3$.  This fact was used in \cite{Davies:2011fr} to construct,
    via a $\IZ_2$-hyperconifold transition, the first known Calabi--Yau
    threefold\footnote{In fact, $\Dic_3$ acts freely on three distinct families of smooth
    Calabi--Yau threefolds, all connected by conifold transitions.  Each of the quotients
    can undergo a $\IZ_2$-hyperconifold transition, leading to three spaces with
    fundamental group $S_3$, which in turn are connected by conifold transitions
    \cite{Braun:2009qy}.} with fundamental group $S_3$.
    
    We can instead construct a $\IZ_4$-hyperconifold transition from the space
    $X^{1,4} = \widetilde X^{8,44}/\Dic_3$ \cite{Braun:2009qy}.  This can indeed be
    done; the details can be checked following the same procedure as in the last
    example, and we obtain a new smooth Calabi--Yau $X^{4,3}$.  Because the
    subgroup $\IZ_4$ is not normal, we must compute its normal closure in order
    to find the fundamental group of $X^{4,3}$.
    
    Note that $g_3 g_4 g_3^{-1} = g_4 g_3$, and between them, $g_4$ and
    $g_4 g_3$ generated all of $\Dic_3$.  Therefore $\IZ_4^{\Dic_3} = \Dic_3$;
    Theorem \ref{th:pi1} then implies that $\pi_1(X^{4,3})$ is trivial.
\end{example}
\begin{example}[`Swiss-cheese' manifolds from hyperconifold transitions]

A popular idea in string theory model building is the `LARGE volume scenario'
in Type IIB \cite{Balasubramanian:2005zx,Conlon:2005ki,Cicoli:2008va}.  This
requires a three-dimensional Calabi--Yau manifold admitting a particular limit
in K\"ahler moduli space.  It must be possible to take the overall volume of the
manifold to infinity while certain prime divisors remain of finite size.  Such
spaces have been dubbed `Swiss-cheese manifolds'.

Hyperconifold transitions provide an easy and systematic way to construct such
Swiss-cheese manifolds, which complements the use of the several known
ad-hoc examples, and the computationally-intensive search technique outline
in \cite{Gray:2012jy}.
\end{example}

\newpage
\begin{example}[A non-factorial hyperconifold with a Calabi--Yau resolution]
    In \cite{Balasubramanian:2012wd}, a\linebreak technique was introduced for finding
    embeddings of certain toric singularities into compact\linebreak Calabi--Yau manifolds.
    Briefly, let $Y$ be an affine toric Calabi--Yau.  To embed $Y$ in a\linebreak compact
    Calabi--Yau, the authors of \cite{Balasubramanian:2012wd} seek a compact
    toric Fano fourfold which\linebreak contains an affine patch $Y\times\IC^*$, such that a
    generic anti-canonical hypersurface intersects it in $Y\times\{\pt\}$.
    
    Using the technique described above, the $\IZ_3$-hyperconifold\footnote{In
    \cite{Balasubramanian:2012wd}, this was referred to as (the cone over)
    $Y^{3,0}$, following earlier AdS/CFT literature \cite{Gauntlett:2004yd}.  The
    cones over the spaces $Y^{p,0}$ are exactly the hyperconifolds $\cC_{p,1}$,
    which can be seen by comparing the toric data in \cite{Martelli:2004wu} with
    that from Lemma \ref{lem:diagrams}.} $\cC_{3,1}$ was embedded in a compact
    Calabi--Yau threefold.  The ambient toric space is described by the face fan
    of the four-dimensional polytope with the following vertices:
    \begin{equation*}
        \begin{array}{rrrrrrr}
            v_1 & v_2 & v_3 & v_4 & v_5 & v_6 & v_7 \\
             0 & 0 & 0 & 1 & 0 & -1 & 0 \\
             0 & 0 & \phantom{-}1 & \phantom{-}1 & -1 & 0 & \phantom{-}2 \\
             -1 & 0 & 1 & 2 & 2 & 2 & 2 \\
             0 & -1 & 1 & 3 & 3 & 3 & 3
        \end{array}
    \end{equation*}
    It is clear that the last four vertices lie on the same two-dimensional face, and
    the\linebreak corresponding affine patch is precisely $\cC_{3,1}\times\IC^*$ (to see this,
    it helps to sketch the\linebreak positions of the vertices in the plane).  It follows that the
    divisors which correspond to these vertices are non-Cartier.  Indeed, they are
    simply $D_i\times\IC^*$, where the $D_i$ are the four toric (non-Cartier) divisors
    on $\cC_{3,1}$, and therefore also restrict to non-Cartier divisors on the
    Calabi--Yau hypersurface, demonstrating that it is not factorial.
    
    A smooth Calabi--Yau resolution is also constructed, and the corresponding
    local\linebreak resolution of $\cC_{3,1}$ is the `wrong' one as discussed in Example
    \ref{ex:C31_resolutions}.  There is no contradiction,\linebreak because the singular
    variety is not factorial.  Presumably there is no smoothing of the singular
    variety, so that this example does not correspond to a transition.
\end{example}
\begin{example}[A smoothable non-factorial hyperconifold]
    In \cite{Braun:2011hd}, Calabi--Yau manifolds with Hodge numbers
    $\hodgenos = (1,1)$ were constructed via free actions of groups of order
    twenty-four on a simply-connected manifold with $\hodgenos = (20,20)$.
    The three different groups have a common $\IZ_3$ subgroup (this is not
    maximal; the intersection of the three groups is $\IZ_6$), and all act
    freely on the \emph{same} one-parameter family.
    
    One might be interested in finding $\IZ_3$-hyperconifold transitions from
    these spaces, to\linebreak construct new (rigid) Calabi--Yau manifolds with Hodge
    numbers $\hodgenos = (3,0)$.\linebreak  However, if we arrange for $X^{1,1}$ to
    develop a $\IZ_3$-hyperconifold, it in fact develops four such singularities.
    The reason for this is the extra symmetry; the one-parameter family has a
    larger symmetry group than just the order-24 quotient group, and nodes
    must fall into orbits under the larger symmetry group.
    
    If we specialise the hypersurface to intersect the $\IZ_3$-fixed points,
    then its equation\linebreak simplifies as follows.  Take toric coordinates
    $t_1, t_2, t_3, t_4$ on the four-torus, and define\linebreak coordinates centred on
    one $\IZ_3$-fixed point by letting $t_i = e^{2\pi\ii/3} + x_i$.  Then the
    hypersurface equation becomes
    \begin{equation*}
        x_1 x_4 f_1(\vec x) + x_2 x_3 f_2(\vec x) = 0 ~,
    \end{equation*}
    for certain polynomials $f_1, f_2$.  Clearly, the hypersurface is not factorial
    in this case; one example of a non-Cartier divisor is $x_1 = x_2 = 0$.
    
    Our previous arguments for a Calabi--Yau resolution therefore fail.  However,
    we can certainly obtain resolutions which are complex manifolds with
    $c_1 = 0$, by removing the four $\IZ_3$-hyperconifolds and gluing in
    crepant resolutions of $\cC_{3,1}$.  The resulting spaces have Euler number
    $0 + 4\times 6 = 24$.  I have been unable to establish whether there is a
    projective resolution among these; if so, it would correspond to a rigid
    Calabi--Yau manifold with Hodge numbers $\hodgenos = (12,0)$.  The
    number $h^{1,1} = 12$ must arise from the one Cartier divisor class on
    $X^{1,1}$, the eight `local' divisors which project to the singular points (two to
    each), and the classes of the proper transforms of three independent
    non-Cartier divisors on the singular space.
    
    So this is a more complicated transition than simply a $\IZ_3$-hyperconifold
    transition (or even three such transitions `performed simultaneously'). 
\end{example}
%

%%%
%\section{Summary}

%%%
\section*{Acknowledgements}

I would like to thank Mark Gross for showing me an example which led directly
to Theorem \ref{th:mirror_sym}, and Karl Schwede for pointing out Lemma
\ref{lem:moving_support}.  I would also like to thank Mark Gross and Bal\'azs
Szendr\H oi for helpful comments about resolutions.  This work was supported
by the Engineering and Physical Sciences Research Council
[grant number EP/H02672X/1].

%%%
\appendix

\newpage
%%%
\section{The exceptional hyperconifold} \label{app:exception}

In Section~\ref{sec:classification}, we ignored the possibility that the generator
of a hyperconifold quotient group might have a non-trivial projection onto the
$\IZ_2$ factor of the symmetry group.  In this appendix we will show that
there is only one such case---another $\IZ_4$-hyperconifold,
distinct from $\cC_{4,1}$.

%We will also show that this hyperconifold cannot
%occur when a free group action develops a fixed point.

%
\begin{lemma} \label{th:exception}
    Up to isomorphism, the only hyperconifold singularity for which the
    quotient group has a non-trivial projection to the $\IZ_2$ factor in
    \eqref{eq:sym_group} is given by the following $\IZ_4$ action:
    \begin{equation*}
        (y_1, y_2, y_3 y_4) \mapsto (\ii y_1, y_3, -y_2, \ii y_4) ~.
    \end{equation*}
\end{lemma}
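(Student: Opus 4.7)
The plan is to parametrise a generic finite-order element $g$ of the symmetry group that projects nontrivially onto the $\IZ_2$ factor of \eqref{eq:sym_group}, then combine the Gorenstein condition with isolated-fixed-point analysis to force $n=4$ and pin the action down up to coordinate change. First, I would lift such a $g$ as $g=(A,B)\tau$ in $\bigl(GL(2,\IC)\times GL(2,\IC)\bigr)\rtimes\IZ_2$, where $\tau$ exchanges $(z_1,z_2)$ with $(z_3,z_4)$. A short calculation in the semidirect product shows that conjugation by $(C,D)$ transforms $(A,B)$ to $(C^{-1}AD,\,D^{-1}BC)$, so the choice $C=A,\,D=\mathbbm{1}$ normalises $A=\mathbbm{1}$, after which residual conjugation by the diagonal subgroup $(C,C)$ diagonalises $B=\mathrm{diag}(\a,\b)$ (any finite-order matrix is diagonalisable). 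Substituting into \eqref{eq:homog_coords} yields the induced action on the hypersurface coordinates:
\begin{equation*}
(y_1,y_2,y_3,y_4)\mapsto(\a y_1,\,\b y_3,\,\a y_2,\,\b y_4).
\end{equation*}

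Next, I would impose the Gorenstein condition by tracking $\O$ from \eqref{eq:Omega}. Unlike the purely diagonal case of Theorem~\ref{th:classification}, the transposition $dy_2\leftrightarrow dy_3$ in the numerator contributes an extra sign: a direct computation gives $p\mapsto\a\b\,p$ and $dy_1\wedge\cdots\wedge dy_4\mapsto-\a^2\b^2\,dy_1\wedge\cdots\wedge dy_4$, hence $\O\mapsto-\a\b\,\O$. Invariance of $\O$ therefore forces $\a\b=-1$.

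The heart of the argument is enforcing that every nontrivial power of $g$ fixes only the origin on $\cC$. Let $n=2m$ denote the order of $g$ (necessarily even, since the projection to $\IZ_2$ is nontrivial). The diagonal squares $g^{2j}$ scale $y_1,y_4$ by $\a^{2j},\b^{2j}$ and scale the $(y_2,y_3)$-plane by $(\a\b)^j$, while the odd powers $g^{2j+1}$ swap the $y_2,y_3$ directions with coefficients whose product is $(\a\b)^{2j+1}=-1\neq 1$, so odd powers contribute no additional constraint on that plane. Requiring isolated fixed loci on the axes $y_1,y_4\subset\cC$ forces $\a,\b$ to be primitive $n$-th roots of unity; the sharp new constraint is $(\a\b)^j\neq1$ for $0<j<m$, which with $\a\b=-1$ demands every such $j$ be odd, hence $m\le 2$. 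The case $m=1$ would give $\a\b=(-1)(-1)=1$, contradicting Gorenstein, so necessarily $n=4$ with $\a=\b\in\{\ii,-\ii\}$; the two sign choices correspond to $g$ and $g^{-1}$ and therefore generate the same cyclic subgroup. Finally, the coordinate change $(y_2,y_3)\mapsto(ay_2,\,a^{-1}y_3)$ with $a^2=\ii$ preserves $p$ and conjugates the action into the stated form $(y_1,y_2,y_3,y_4)\mapsto(\ii y_1,y_3,-y_2,\ii y_4)$. The main delicate point is the isolated-fixed-point analysis on the swapped $(y_2,y_3)$ pair, which is what ultimately caps $n$ at $4$; once that bookkeeping is carried out the rest is elementary root-of-unity combinatorics.
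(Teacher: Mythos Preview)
Your argument is correct and follows essentially the same route as the paper: both reduce to a normal form, impose Gorenstein invariance of $\O$ to get a single sign constraint, and then observe that the induced action on the $(y_2,y_3)$ block squares to $-I$, forcing $g^4$ to fix nontrivial points unless $g$ has order exactly four. Your upfront conjugation to $A=\mathbbm{1}$, $B$ diagonal is a slightly cleaner normalisation than the paper's, and your elimination of $m=1$ is more explicit; the only slip is that the final rescaling should use $a^2=-\ii$ (or compose with $y_2\leftrightarrow y_3$) to land exactly on the stated form, though this does not affect the isomorphism class.
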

\begin{proof}
The generator of the $\IZ_2$ factor in the symmetry group of the conifold acts
on the homogeneous coordinates as
$z_1 \leftrightarrow z_3, z_2 \leftrightarrow z_4$, and therefore on the
hypersurface coordinates simply as $y_2 \leftrightarrow y_3$.  Inspecting the
expression \eqref{eq:Omega} for the holomorphic $(3,0)$-form, we see that
it is odd under this transformation: $\O \mapsto -\O$.  Therefore, to obtain a
hyperconifold singularity, we must compose it with some torus
transformation under which $\O$ is also odd.

We have already seen that $\O$ transforms the same way as the polynomial
$p = y_1 y_4 - y_2 y_3$ under the torus action, so we seek finite-order torus
elements under which the monomials $y_1 y_4$ and $y_2 y_3$ are each
odd.  When combined with the order-two transformation above, this leads us
to a subgroup generated by an element of the form
\begin{equation} \label{eq:exceptional_action}
    g : (y_1, y_2, y_3, y_4) \mapsto (\h y_1, \h^k y_3, \h^{n-k} y_2, \h^{n-1} y_4)~,
\end{equation}
where $\h = e^{\pi\ii/n}$ for some $n$, and $0 < k < n$.

Now we must impose that the origin is the only fixed point of any of the group
elements.  First, consider the group action on the linear subspace parametrised
by $(y_2, y_3)$; this is given by the matrix
\begin{equation*}
    \left( \begin{array}{ll}
        0 & \h^{k} \\
        \h^{n-k} & 0 \end{array} \right) ~.
\end{equation*}
We can see immediately that the fourth power of this matrix is the identity,
and therefore that $g^4$ fixes, for example, the whole line
$\{(0, y_2, 0, 0\} \subset \cC$, irrespective of our choice of $n, k$.  Since we
require that any non-trivial group element fixes only the origin, we conclude that
$g$ must have order four. Therefore $\h = \ii$, and $k=0$ or $k=2$.  The
coordinate redefinition $y_2 \leftrightarrow y_3$ exchanges these two
possibilities while leaving the polynomial $p$ invariant, so without loss of
generality, we may assume that $k=0$.
\end{proof}
\begin{remark}
    The polynomial $p$ is odd under the generator of the action described above.
    This singularity therefore does \emph{not} arise from the limit of a free group
    action on the deformed conifold given by $p + t = 0$, $t\neq 0$.
\end{remark}
\newpage
\bibliographystyle{utphys}
\bibliography{references}

\end{document}